\newcommand{\ed}{{\rm d}}
\newcommand{\w}{{\mathchoice{\,{\scriptstyle\wedge}\,}{{\scriptstyle\wedge}}
      {{\scriptscriptstyle\wedge}}{{\scriptscriptstyle\wedge}}}}
\newcommand{\del}{{\partial}}
\newcommand{\delx}{\partial_{\xi}}
\newcommand{\delxb}{\partial_{\xib}}
\newcommand{\gap}[1]{\vsp{#1pc}}
\newcommand{\liealgebra}[1]{{\mathfrak {#1}}}
\newcommand{\g}{\liealgebra{g}}
\newcommand{\sla}{\liealgebra{sl}}
\newcommand{\so}{\liealgebra{so}}
\newcommand{\liegroup}[1]{{\operatorname{#1}}}
\newcommand{\SO}{\liegroup{SO}}
\newcommand{\R}{\mathbb R}
\newcommand{\C}{\mathbb C}
\newcommand{\Z}{\mathbb Z}
\newcommand{\PP}{\mathbb P}
\newcommand{\tr}{\rm tr}
\newcommand{\dd}[2]{\frac{\partial {#1}}{\partial {#2}}}
\newcommand{\bdot}{\overset{\bm .}}
\newcommand{\mcd}{\mathcal D}
\newcommand{\mce}{\mathcal E}
\newcommand{\mcf}{\mathcal F}
\newcommand{\mci}{\mathcal I}
\newcommand{\mcl}{\mathcal L}
\newcommand{\mcm}{\mathcal M}
\newcommand{\mcr}{\mathcal R}
\newcommand{\mcv}{\mathcal V}
\newcommand{\nB}{\textnormal{B}}
\newcommand{\ES}{\textnormal{S}}
\newcommand{\x}{\textnormal{x}}
\newcommand{\im}{\textnormal{i}}
\newcommand{\bpsi}{\boldsymbol{\psi}}
\newcommand{\bphi}{\boldsymbol{\phi}}
\newcommand{\bsigma}{\boldsymbol{\sigma}}
\newcommand{\bba}{\tb{a}}
\newcommand{\bbb}{\tb{b}}
\newcommand{\bbc}{\tb{c}}
\newcommand{\bG}{\tb{G}}
\newcommand{\bbt}{\tb{t}}
\newcommand{\bX}{\tb{X}}
\newcommand{\bY}{\tb{Y}}
\newcommand{\bmg}{\boldsymbol{\g}} 
\newcommand{\ol}{\overline}
\newcommand{\zb}{\ol{z}}
\newcommand{\hb}{\bar h}
\newcommand{\ab}{\ol{a}}
\newcommand{\etab}{\ol{\eta}}
\newcommand{\tba}{\ol{t}}
\newcommand{\xib}{\ol{\xi}}
\newcommand{\nR}{\tn{R}}
\newcommand{\Fh}[1]{\hat{\mcf}^{(#1)}}
\newcommand{\Xh}[1]{\hat{X}^{(#1)}}
\newcommand{\Ih}[1]{{\hat{\rm I}}^{(#1)}}
\newcommand{\xinf}{X^{(\infty)}}
\newcommand{\xinfh}{\hat{X}^{(\infty)}}
\newcommand{\iinf}{{\rm I}^{(\infty)}}
\newcommand{\iinfh}{\hat{\rm I}^{(\infty)}}
\newcommand{\finf}{{\mcf}^{(\infty)}}
\newcommand{\finfh}{{\hat{\mcf}}^{(\infty)}}
\newcommand{\be}{\begin{equation}}
\newcommand{\ee}{\end{equation}}
\newcommand{\benu}{\begin{enumerate}}
\newcommand{\enu}{\end{enumerate}}
\newcommand{\beit}{\begin{itemize}}
\newcommand{\enit}{\end{itemize}}
\newcommand{\bp}{\begin{pmatrix}}
\newcommand{\ep}{\end{pmatrix}}
\newcommand{\lra}{\longrightarrow}
\newcommand{\ff}{{\rm I \negthinspace I}}
\newcommand{\n}{\notag}
\newcommand{\noi}{\noindent}
\newcommand{\tn}{\textnormal}
\newcommand{\tb}{\textbf}
\newcommand{\vsp}{\vspace}
\newcommand{\hook}{\hookrightarrow}
\newcommand{\marg}{\marginpar}
\newcommand{\one}{\vspace{1mm}}
\newcommand{\two}{\vspace{2mm}}
\newcommand{\twoline}{\two \begin{center}\line(1,0){100}\end{center}\two\one}
\newcommand{\ftmark}{\footnotemark}
\newcommand{\fttext}{\footnotetext}
\newcommand{\sub}{\subsection}
\newcommand{\subb}{\subsubsection}
\newcommand{\Rmnum}[1]{\expandafter\@slowromancap\romannumeral #1@}
\theoremstyle{plain}
\newtheorem{thm}{Theorem}[section]
\newtheorem{lem}[thm]{Lemma}
\newtheorem{cor}[thm]{Corollary}
\newtheorem{prop}[thm]{Proposition}
\theoremstyle{definition}
\newtheorem{defn}{Definition}[section]
\begin{document}

\title[CMC hierarchy \Rmnum{1}]
{CMC hierarchy \Rmnum{1}: Commuting symmetries and loop algebra}
\author{Joe S. Wang}
\email{jswang12@gmail.com}
\subjclass[2000]{53C43, 37K10}
\date{\today}
\keywords{constant mean curvature surface,  commuting symmetry,  loop algebra, 
formal Killing field, hierarchy,  Adler-Kostant-Symes, bi-Hamiltonian, R-matrix, 
conservation law}
\begin{abstract} 
We propose an extension of the structure equation
for constant mean curvature (CMC) surfaces in a three dimensional Riemannian space form
to the associated CMC hierarchy of evolution equations
by the higher-order commuting symmetries.
Via the canonical formal Killing field, considered as 
an infinitely prolonged and loop algebra valued Gau\ss\, map,
the CMC hierarchy is obtained by the assembly of a pair of 
Adler-Kostant-Symes bi-Hamiltonian hierarchies  
to the original CMC system.
The infinite sequence of higher-order conservation laws of the CMC system 
admits the corresponding extension,
and we find a formula for the generating series of the representative 1-forms.
We also introduce a class of generalized (complexified) CMC surfaces  
as the phase space of the CMC hierarchy.
\end{abstract}
\maketitle

\setcounter{tocdepth}{2}
\tableofcontents



%
%
%

\section{Introduction}\label{sec:intro}
\sub{Symmetry of a differential equation}
\subb{Classical symmetry}
The classical notion of symmetry of a differential equation is
a change of variable;
a change of dependent and independent variables which maps solutions to solutions.
From the inception of Lie groups and Lie algebras, 
Lie himself viewed a Lie group as a transformation group of symmetries
of a differential equation, and more generally of  a differential geometric object,
\cite{Lie1880,Lie1884} translated in \cite{Hermann1975,Hermann1976}. 

On the other hand, a classical symmetry
satisfies the uniform jet-order constraints and
it is generated by the (infinitesimal) transformations defined 
on a finite jet space.
\subb{Generalized symmetry}
A conceptual working definition of the symmetry of a differential equation would be:
\two

\begin{center}
\emph{a (local) Lie group or a Lie algebra which acts on the (formal) moduli space of solutions.}
\end{center}

\two\noi
In this generalized sense, many new forms of symmetries,  
which are rooted in the deeper structural properties of a differential equation,
become available.
For example, one of the initial discoveries regarding the integrable hierarchies 
was that a differential equation may admit another compatible (commuting) evolution equation 
as a symmetry, \cite{Mulase1994}\cite{DJT2000}\cite{Dickey2003} and the references therein.
\subb{Infinite prolongation and integrable extension}
There exist many differential equations of interest
which admit the various kinds of  generalized, higher-order symmetries.
To accommodate these,
it is necessary to consider the infinite jets of a differential equation as a whole,
and the infinite prolongation space  provides the adequate background for analysis.

Furthermore, there are occasions when it is relevant to introduce 
the auxiliary non-local variables by integrable extension,
\cite{Krasilshchik2004}. Simply stated, this amounts to supplementing the given differential equation
with an additional system of compatible ODE's.

 
\begin{table}[h] \centering
  \begin{tabular}{ l | l  }
Differential equation&Algebraic equation  \\
    \hline\hline
\pbox{8cm}{integrable extension} & 
\pbox{8cm}{field extension}    \\ \hline
\pbox{8cm}{infinite  prolongation} &  
\pbox{8cm}{completion}   \\  
  \hline 
\end{tabular}
\two\two
  \caption{Analogy with  algebraic equation}\label{tab:one}
\end{table} 
\vspace{-6mm}

In this extended setting, the space of symmetries corresponds
to the kernel of the linearization of the infinitely prolonged differential equation.
The foundational works of  Tsujishita \cite{Tsujishita1982},  Vinogradov \cite{Vinogradov19841,  Vinogradov19842}, and Bryant and Griffiths \cite{Bryant1995} 
provide the general methods of commutative algebraic analysis 
to compute the symmetries and  other cohomological invariants of a differential equation.

Regarding the integrable extension, we mention for an example that 
the log of tau function of   KP hierarchy 
is defined as the potential for a non-local closed 1-form obtained by dressing,
\cite{Dickey2003}.
\sub{Symmetry extension}
One of the characteristic defining properties of an integrable equation is that  
it admits an infinite sequence of higher-order commuting flows (evolution equations)
as symmetries.
This in turn leads to the extension of the given differential equation to  
the associated infinite hierarchy of equations.
Compared to infinite prolongation and integrable extension,
which are vertical extensions in a sense,
the symmetry extension of a differential equation, 
which increases the number of independent variables,
can be considered as a horizontal extension. 

\gap{1.7}

\begin{figure}[h]
\centerline{
\xymatrix@C=2cm{\tn{generalized symmetries} \ar@/^7.2mm/[r] & \ar@/^7mm/[l] 
\tn{symmetry extension}}
}\two\two
\label{fig:extension}
\caption{Symmetry iteration}
\end{figure}
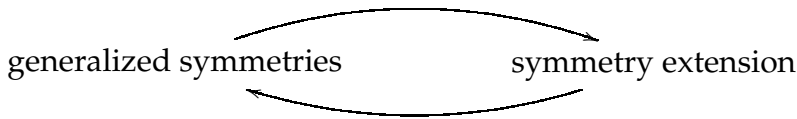
 
An important consequence  of the symmetry extension
is that it may lead to the additional symmetries of a differential equation.
By iterating the two processes of finding  generalized symmetries and  symmetry extension,
one ultimately hopes to gain an insight into \emph{solving} the given differential equation.
Note  that 
the stationary solutions to the additional symmetries of an integrable equation
provide a new class of solutions different from the  finite type, algebro-geometric solutions, \cite{Moerbeke1994}\cite{Mulase1999}\cite{Safronov2013}.
\subsection{CMC hierarchy} 
The elliptic Monge-Ampere system for constant mean curvature (CMC) surfaces 
in a three dimensional Riemannian space form 
is the typical example of an integrable elliptic equation;
in particular,  it possesses an infinite sequence of higher-order symmetries 
and conservation laws, \cite{Wang2013}.
We wish to apply the idea of symmetry extension described above
to find the additional symmetries,
and subsequently to understand their stationary solutions.

In this first part of the series on CMC hierarchy, we will discuss 
the infinite sequence of commuting symmetries of the CMC system
which are based on a twisted loop algebra $\bmg\subset \sla(2,\C)((\lambda))$, \eqref{eq:twisted}.
\subb{Purpose}
The purpose of this paper is to show that the CMC system
admits a symmetry extension by the higher-order commuting  symmetries
to the  compatible CMC hierarchy.
Although in a slightly different context,
the general model for our investigation is  the Frenkel's work \cite{Frenkel1998}
on  Drinfeld-Sokolov hierarchies.

\subb{CMC hierarchy}
We will find that, as a system of PDE's, 
the proposed CMC hierarchy is locally equivalent to,
\[ -\ol{\tn{mKdV hierarchy}}^t  \oplus \tn{elliptic sinh-Gordon}\oplus\tn{mKdV hierarchy.} 
\]
For a related work on sine Gordon $\oplus$ mKdV hierarchy, we refer to \cite{Gesztesy2000}.
 
From this, it is expected that a substantial part of the existing theory of integrable systems
can be introduced to the study of CMC surfaces. 

\subsection{Results}
In the previous work \cite{Wang2013}, 
we gave a differential algebraic inductive formula  
for the $\bmg_{\geq 1}$-valued\ftmark\fttext{Here $\bmg_{\geq 1}=
\bmg\cap\sla(2,\C)[[\lambda]]\lambda$ is 
the subalgebra of formal power series of $\lambda$-degree $\geq 1$, 
\eqref{eq:vsdecomp}, \eqref{eq:vsdecompdual}.}   
 canonical formal Killing field, denoted by $\bY$.
The Jacobi fields and conservations laws of the CMC system were embedded 
in  the coefficients of $\bY$,  and  accordingly 
the infinite sequence of higher-order Jacobi fields and conservation laws were completely determined.

The algebraic basis of this results lies in 
the compatibility of  the prolongation structure of the CMC system with 
the recursive structure equation of the loop algebra $\bmg_{\geq 1}$.
We claim that 
the consequences of this compatibility go  beyond 
the effective calculation of  the Jacobi fields and conservation laws.
\subb{Complexified CMC surface}
We  introduce a class of complexified CMC surfaces as a generalization of the ordinary CMC surfaces,
Defn.\ref{defn:gCMC}.
They serve as  the phase space of the CMC hierarchy.

\subb{Infinitely prolonged and loop algebra valued Gau\ss\, map}
The  canonical formal Killing field $\bY$ induces a map,
$$\Fh{\infty}_{**}\xrightarrow{\quad(-\ol{\bY}^t,\bY)\quad}-\ol{\bmg_{\geq 1}}^t\times\bmg_{\geq 1},$$
which can be considered as an infinitely prolonged version of Gau\ss\, map, 
Fig.\ref{fig:InfGauss2}, Defn.\ref{defn:InfGauss2}.
Here $\Fh{\infty}$ is roughly the infinite jet space of the CMC system, 
and $\Fh{\infty}_{**}\subset\Fh{\infty}$ is a certain Zariski open set, \S\ref{sec:Zariski}.

\subb{CMC hierarchy}
The twisted loop algebra $\bmg$ supports the Adler-Kostant-Symes (AKS) bi-Hamiltonian hierarchy,
which is induced from the vector space decomposition, \eqref{eq:vsdecomp},
$$\bmg=\bmg_{\leq -1}+\bmg_{\geq 0},$$
and the associated R-matrix, Defn.\ref{defn:AKS}.
The fundamental observation for the construction of the CMC hierarchy is that 
the image of the map $(-\ol{\bY}^t,\bY)$ is tangent to (or agrees with)
the first two flows of the pair  of AKS hierarchies on $(-\ol{\bmg_{\geq 1}}^t,\bmg_{\geq 1})$ respectively,
Lem.\ref{lem:critical}.
From the formal symmetry of the Maurer-Cartan form, \eqref{eq:phiformalsymm},
the CMC hierarchy is obtained by
attaching the pair of AKS hierarchies via $(-\ol{\bY}^t, \bY)$
to a combined system of equations on $-\ol{\bmg_{\geq 1}}^t\times\bmg_{\geq 1}$,
Thm.~\ref{thm:main}.

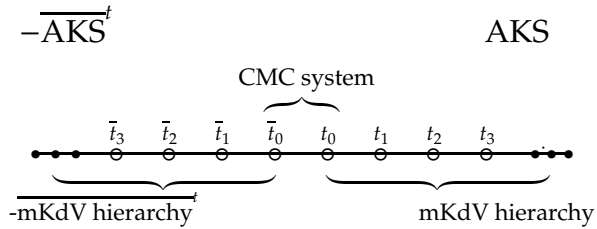
\begin{figure}[h!]
\begin{picture}(350, 70)( 10, 15)

\put(74,62){$-\ol{\tn{AKS}}^t$}
\put(250,62){AKS}
\put(166.4,32){$\overbrace{}$}
\put(190,15){$\underbrace{ \hspace{3cm} }$}
\put(86.4,15){$\underbrace{ \hspace{3cm} }$}
\put(157,46){\scriptsize{CMC system}  }  
\put(225,-5){\scriptsize{mKdV hierarchy}  }  
\put(70,-5){\scriptsize{-$\ol{\tn{mKdV hierarchy}}^t$}  }  

\put(80,20){\line(1,0){200}}
 
\multiput(107.5,16.4)(20,0){8}{$\circ$}
\put(78,17.5){\tiny{$\bullet$}\, \tiny{$\bullet$}\;\,\tiny{$\bullet$}}
\put(267,17.5){\tiny{$\bullet$}\.\tiny{$\bullet$}\;\tiny{$\bullet$}}

\put(168,25){\tiny{$\tba_0$}} 
\put(148,25){\tiny{$\tba_1$}} 
\put(128,25){\tiny{$\tba_2$}} 
\put(108,25){\tiny{$\tba_3$}} 
\put(188,25){\tiny{$t_0$}} 
\put(208,25){\tiny{$t_1$}} 
\put(228,25){\tiny{$t_2$}} 
\put(248,25){\tiny{$t_3$}} 
\end{picture}
\two\two\two

\caption{Anatomy of \;CMC hierarchy $\simeq  -\ol{\tn{AKS hierarchy}}^t$  $ \oplus$ AKS hierarchy}
\label{CMCdiagram}
\end{figure} 
 
\noi 
In this schematic picture\ftmark, $\{ \tba_m, t_m \}_{m\geq 0}$  are the time variables 
for the AKS hierarchies. 
\fttext{Strictly speaking, the CMC hierarchy is an integrable extension of 
 $-\ol{\tn{AKS hierarchy}}^t$  $ \oplus$  AKS hierarchy, Fig.\ref{fig:anatomy2}.}
Note that the original CMC system is embedded as the $\tba_0, t_0$-flows.
\subb{Extension of conservation laws}
The infinite sequence of higher-order conservation laws of the CMC system 
admits the corresponding extension to  the  CMC hierarchy.
We find an explicit formula for the generating series of the representative 1-forms, Thm.~\ref{thm:extcvlaw}. 
\subb{Linear finite type surfaces}
For an application, we show that
the linear finite type  CMC surfaces are characterized by the property
that the canonical formal Killing field $\bY$
is  stationary with respect to a higher-order symmetry, Cor.~\ref{cor:LFT}.
 
\sub{Contents}
After a summary of the  results from  \cite{Wang2013} in \S\ref{sec:summary},
we introduce in  \S\ref{sec:gCMC} a class of generalized CMC surfaces
as a complexification of the CMC surfaces.
In \S\ref{sec:YGauss}, 
we examine the algebraic properties of the $\bmg_{\geq 1}$-valued canonical formal Killing field,
considered as an infinitely prolonged version of Gau\ss\, map.
In \S\ref{sec:AKS},
the AKS construction of bi-Hamiltonian hierarchy is adapted to  the twisted loop algebra $\bmg$.
Based on this, we propose in \S\ref{sec:CMChierarchy2}
an ansatz for the CMC hierarchy in terms of 
an $\sla(2,\C)[[\lambda^{-1},\lambda]]$-valued\ftmark\fttext{Here $\sla(2,\C)[[\lambda^{-1},\lambda]]$ is
 the Lie algebra of $\sla(2,\C)$-valued formal power series in $\lambda^{-1}, \lambda$.} 
  extended Maurer-Cartan form.
In \S\ref{sec:CMChierarchy4},
the CMC hierarchy is translated into the $\so(4,\C)$-setting.
In \S\ref{sec:proof}, we give a proof by direct computation
that the proposed structure equation for the CMC hierarchy is compatible.
In \S\ref{sec:extcvlaw}, we show that there exists the corresponding extension of 
the infinite sequence of higher-order conservation laws.
In \S\ref{sec:LFT}, we give a geometric characterization 
of the linear finite type  CMC surfaces.

\sub{Remarks}
\subb{Extension sequence of the underlying Lie algebras}
In terms of the underlying Lie algebras only, the extension process of the CMC system up to the CMC hierarchy can be summarized as follows:
$$
\sla(2,\C)\lra\sla(2,\C)[\lambda^{-1},\lambda]\lra\sla(2,\C)[[\lambda^{-1},\lambda]].$$

The next step of the extension involves the Virasoro type of non-commuting symmetries (called spectral symmetries) and the associated generalized affine Kac-Moody algebras.
This will be reported in Part $\ff$ of the series.
\subb{Application}
The construction of compact, high genus CMC surfaces so far has relied on 
the analytic existence results of PDE's;
either to find the fundamental domains for reflection,
or to perturb an approximate CMC surface obtained by gluing to an actual CMC surface, 
 \cite{Brendle2013} for a survey of the related works.
One of the initial objectives of \cite{Wang2013} was to find 
a class of generalized (nonlinear) finite type CMC surfaces.
We hope that the stationary constraints from the additional symmetries 
may lead to a new class of CMC surfaces which can be analyzed by the methods of ODE's.

\section{Summary of previous results}\label{sec:summary}
We recall  the relevant notations and results from \cite{Wang2013}.
We only give a brief description  and refer the reader to \cite{Wang2013} for the details.
 
\subsection{Differential system}\label{sec:summary1} $\,$

\noi\one
[\tb{Grassmann bundle of oriented 2-planes}]
\begin{align}
M: &\; \tn{three dimensional Riemannian space form of constant curvature $\epsilon$,}\n\\
\mcf:=\tn{Iso}(M): &\; \tn{group of isometries of $M$},\n\\
\quad X:=\tn{Gr}^+(2,TM):  &\; \tn{Grassmann bundle of oriented 2-planes}.\n 
\end{align}
They fit into the commutative diagram:

\two
\centerline{\xymatrix{ &&\mcf \ar[dl]_-{\SO(2)}\ar[dd]^{\SO(3)}  \\
  & X \ar[dr]_-{\SO(3)/\SO(2)=\ES^2}  &   \\ & &M}}\two\two

\noi
[\tb{Structure constant $\gamma$}]\newline
\indent
We shall consider the immersed oriented surfaces in $M$ of constant mean curvature  $\delta$.
\begin{align}
 \gamma^2:=\epsilon+\delta^2: &\; \tn{structure constant, \hspace{5pc}$\qquad$}\qquad\qquad\n\\
\tn{assumption}:  &\;  \tn{$\gamma^2>0$ and $\gamma$ is real}.\n
\end{align}
The case $\gamma^2<0$ appears to be incompatible with 
the certain aspects of the theory of integrable systems applied in this work.\two

\noi
[\tb{CMC system}]
\begin{align}
(X, \mci):    &\; \tn{original CMC system on $X$, $\qquad$ }\qquad\hspace{3pc}\qquad \n\\
(\xinf,\iinf): &\; \tn{infinite prolongation of $(X,\mci)$},\n\\
\finf\to\mcf: &\; \tn{pulled back bundle},\n\\
\finfh\to\finf, \xinfh \to\xinf  : &\; \tn{double covers}.\n
\end{align}
They fit into the commutative diagrams:
\be\label{eq:diagramss}
\xymatrix{ \finf \ar[d] \ar[r] & \xinf \ar[d]\\
\mcf \ar[r]^{\SO(2)} & X,
}\qquad\quad
\xymatrix{ \finfh \ar[d] \ar[r] & \xinfh \ar[d]\\
\finf \ar[r]^{\SO(2)} & \xinf.
}\ee
The corresponding differential ideals on $\finf$, and  $\xinfh, \finfh$ are denoted by 
$\iinf$, and $\iinfh$ respectively.

\subsection{Structure equations}\label{sec:summary2} $\,$
The structure equations recorded below are written modulo the appropriate differential ideals,
see \S\ref{sec:hierarchysystem} for a related remark.
The meaning is generally clear from the context, and we omit the specific descriptions.\two

\noi
[\tb{Basic structures}]
\begin{align}
\xi&:    \; \tn{tautological unitary (1,0)-form},\n\\
\rho&:  \; \tn{connection 1-form},\n\\
\ed\xi&=\im\rho\w\xi, \n \\
\ed\rho&= R\frac{\im}{2} \xi\w\xib,\n\\
\ff&=h_2\xi^2 :  \; \tn{Hopf differential},\n\\
R&=\gamma^2-h_2\hb_2 :  \; \tn{Gau\ss\, curvature}.\n
\end{align}

\noi
[\tb{Infinite prolongation}]
\begin{align}\label{eq:xstrt}
\ed h_j+\im j h_j \rho &= h_{j+1}\xi+ T_j \xib,  \quad j\geq 2,\n\\
T_2&=0, \n\\
T_{j+1}&=   \sum_{s=0}^{j-2}
a_{js} \,  h_{j-s} \, \partial^{s}_{\xi} R,    \; \; \; \mbox{for} \; \; j\geq 2,  \n   \\
& \quad  a_{js} =\frac{(j+s+2) }{2} \frac{ (j-1) !}{(j-s-2)!(s+2)!}
             =\frac{(j+s+2) }{2\, j} {j \choose s+2}, \n\\
& \;\, \partial^{s}_{\xi}  R=\delta_{0s}\gamma^2 -h_{2+s} \hb_2.\n
\end{align}
Here $\im=\sqrt{-1}$ denotes the unit imaginary number.

\two\noi
[\tb{$\sqrt{\ff}$, and balanced coordinates}]
\begin{align}
\omega:=\sqrt{\ff} &=h_2^{\frac{1}{2}}\xi,\n\\
\ed\omega&=0,\n\\
z_j&:=h_2^{-\frac{j}{2}}h_j, \quad j\geq 3,\n\\
\mcr &:=\C[z_3,z_4, \, ... \, ] , \quad 
\ol{\mcr}:=\C[\zb_3, \zb_4, \, ... \, ]. \n
\end{align}
Assign the spectral weights by,
\be\label{eq:spectralweight}
\begin{array}{r|r||r|r}
&\tn{weight} & &\tn{weight}\\  \hline\hline
\omega &-1 &\ol{\omega} &+1\\
z_j & j-2 &\zb_j &-(j-2)\\
h_2\hb_2&0 &
\end{array} \n
\ee 

\subsection{Formal Killing field}\label{sec:summary3} $\,$
 
\one\noi
[\tb{$\sla(2,\C)[\lambda^{-1},\lambda]$-valued Maurer-Cartan form}]
\begin{align}\label{eq:phi2} 
\phi_+&=\bp \cdot& -\frac{1}{2}\gamma \\ \frac{1}{2} \hb_2   &\cdot \ep\xib,\qquad
\phi_0 =\bp \frac{\im}{2}\rho &\cdot \\ \cdot&-\frac{\im}{2}\rho\ep, \qquad
\phi_- =\bp \cdot& -\frac{1}{2}  h_2 \\  \frac{1}{2}\gamma  &\cdot \ep \xi,    \\
\phi_{\lambda}&:=\lambda\phi_+ + \phi_0 + \lambda^{-1}\phi_-
=\bp  \frac{\im}{2}\rho &-\lambda\frac{1}{2}\gamma \xib-\lambda^{-1}\frac{1}{2}  h_2 \xi \n \\
\lambda \frac{1}{2} \hb_2 \xib+ \lambda^{-1}\frac{1}{2}\gamma \xi  &-\frac{\im}{2}\rho\ep.\n 
\end{align}
$$\ed\phi_{\lambda} +\phi_{\lambda}\w\phi_{\lambda} =0.$$

\two \noi
[\tb{$\sla(2,\C)[[\lambda]]\lambda$-valued formal Killing field}]
\be\label{eq:tbY}
\tb{Y}:=\bp -\im\tb{a}&2\tb{c}\\ 2\tb{b}&\im\tb{a} \ep,  
\ee
\be
\tb{a}=\sum_{n=0}^{\infty}\lambda^{2n}a^{2n+1},\qquad
\tb{b}=\sum_{n=0}^{\infty}\lambda^{2n+1}b^{2n+2},\qquad
\tb{c}=\sum_{n=0}^{\infty}\lambda^{2n+1}c^{2n+2}.\n
\ee
\be\label{eq:YKF}
\ed\tb{Y}+[\phi_{\lambda},\tb{Y}]=0.
\ee
\begin{align}\label{eq:b2c2}
&a^1=0, \quad b^2 =-\im\gamma h_2^{-\frac{1}{2}}, \quad c^2=\im h_2^{\frac{1}{2}}, \\
&\{ a^{2n+1},  h_2^{ \frac{1}{2}}b^{2n+2}, h_2^{-\frac{1}{2}}c^{2n+2} \}_{n\geq 0}\subset\mcr.\n
\end{align}
\be\label{eq:b2c2Y}
\det(\bY) =-4b^2 c^2\lambda^2=-4\gamma\lambda^2. 
\ee

\two \noi
[\tb{$\so(4,\C)[\lambda^{-1},\lambda]$-valued Maurer-Cartan form}]
\begin{align}\label{eq:psi4original}
\psi_{+}&= \frac{1}{2}
\bp \cdot & -\gamma   &- \im \gamma   &\cdot \\
\gamma  &\cdot& \cdot & -\hb_2   \\
\im \gamma    &\cdot &\cdot & \im \hb_2  \\
\cdot & \hb_2  &-\im \hb_2   &\cdot \ep\xib, \;\;\;
\psi_{0}=
\bp \cdot &\cdot & \cdot &\cdot \\
\cdot &\cdot& \rho &\cdot \\
\cdot  & -\rho &\cdot &\cdot\\
\cdot &\cdot &\cdot &\cdot \ep, \;\;\;
\psi_{-} = \frac{1}{2}
\bp \cdot & -\gamma & \im \gamma &\cdot \\
\gamma &\cdot& \cdot & -h_2 \\
-\im \gamma   &\cdot &\cdot & -\im h_2   \\
\cdot & h_2   &\im h_2 &\cdot \ep\xi,\\
\psi_{\lambda}&:=\lambda\psi_{+}+\psi_{0}+\lambda^{-1}\psi_{-}.\n
\end{align}
$$
\ed\psi_{\lambda}+\psi_{\lambda}\w\psi_{\lambda}=0.$$
 
\two \noi
[\tb{$\so(4,\C)[[\lambda]]\lambda$-valued formal Killing field}]
\be\label{eq:tbX}
\tb{X}:=\bp 
\cdot& \im(\tb{c}_2+\tb{b}_4)&-(\tb{c}_2-\tb{b}_4)&- \tb{a}_3\\
-\im(\tb{c}_2+\tb{b}_4)& \cdot &-\im \tb{a}_1&-\im(\tb{b}_2+\tb{c}_4)\\
(\tb{c}_2-\tb{b}_4)& \im \tb{a}_1&\cdot &-(\tb{b}_2-\tb{c}_4)\\
\tb{a}_3& \im(\tb{b}_2+\tb{c}_4)& (\tb{b}_2-\tb{c}_4)&\cdot  \ep,
\ee 
\begin{align} 
\tb{a}_1&=\sum_{n=0}^{\infty}(-1)^n\lambda^{4n+0}a^{4n+1},\qquad
\tb{b}_2  =\sum_{n=0}^{\infty}(-1)^n\lambda^{4n+1}b^{4n+2},\qquad
\tb{c}_2  =\sum_{n=0}^{\infty}(-1)^n\lambda^{4n+1}c^{4n+2},\n \\
\tb{a}_3&=\sum_{n=0}^{\infty}(-1)^n\lambda^{4n+2}a^{4n+3},\qquad
\tb{b}_4  =\sum_{n=0}^{\infty}(-1)^n\lambda^{4n+3}b^{4n+4},\qquad
\tb{c}_4  =\sum_{n=0}^{\infty}(-1)^n\lambda^{4n+3}c^{4n+4}.\n  
\end{align} 
\be\label{eq:XKF}
\ed\tb{X}+[\psi_{\lambda},\tb{X}]=0.
\ee

\two\noi
[\tb{Recursive structure equation for the coefficients of  $\bY$}]
\begin{align}\label{eq:abcstrt}
\ed  a^{2n+1}&=  (\im \gamma c^{2n+2}+\im h_2 b^{2n+2})\xi + (\im \gamma  b^{2n}+\im \hb_{2} c^{2n})\xib, \\
\ed  b^{2n+2} - \im  b^{2n+2} \rho&= \frac{\im \gamma }{2} a^{2n+3} \xi + \frac{\im}{2}  \hb_{2}a^{2n+1} \xib ,\n\\
\ed  c^{2n+2} +\im c^{2n+2}  \rho&= \frac{\im}{2}  h_2 a^{2n+3}\xi  + \frac{ \im \gamma }{2} a^{2n+1} \xib.\n
\end{align}
  
\sub{Jacobi fields and conservation laws}\label{sec:summary4}$\,$

\two\noi
[\tb{Jacobi fields}]

Each coefficient $a^{2n+1}$ is a Jacobi field which lies in the kernel of the Jacobi operator,
\be\label{eq:JBoperator}
\mce:=\delx\delxb+\frac{1}{2}(\gamma^2+h_2\hb_2).
\ee
Here  $\delx,\delxb$ are the covariant derivative operators in the $\xi, \xib$ directions 
(mod $\iinfh$) respectively.
Jacobi fields are the generating functions of the generalized  symmetries of $(\xinfh,\iinfh).$

The set $\{\, a^{2n+1}, \ab^{2n+1}\, \}_{n\geq 1}$ spans the space of higher-order Jacobi fields.
The corresponding higher-order symmetries commute with each other.

\two\noi
[\tb{Conservation laws}]

Set
\be\label{eq:varphin}
\varphi^n:=c^{2n+2}\xi+b^{2n}\xib,\quad n\geq 0.
\ee
Then $$\ed\varphi^n=0,$$ 
and each $\varphi^n$ represents a nontrivial conservation law.

The set $\{\,  [\varphi^n], [\ol{\varphi}^n] \, \}_{n\geq 0}$ spans the space of higher-order conservation laws.

\sub{Formal moduli spaces}\label{sec:summarymoduli} 
The differential ideal on each of the infinite prolongation spaces $\xinf, \finf, \xinfh, \finfh$ is formally Frobenius. Denote the formal moduli spaces of the integral foliation respectively by,
\begin{align}\label{eq:formalmoduli}
\widehat{\mcm}_{\mcf}&:=\finfh/  (\iinfh)^{\perp}, \qquad  \widehat{\mcm}:=\xinfh/  (\iinfh)^{\perp}, \\
\mcm_{\mcf}&:=\finf/(\iinf)^{\perp}, \qquad   \mcm:=\xinf/(\iinf)^{\perp}.\n
\end{align}
They fit into the commutative diagram:
\be\label{eq:diagramssm}
\qquad\quad
\xymatrix{ \tn{$\widehat{\mcm}_{\mcf}$} \ar[d] \ar[r] & \tn{$\widehat{\mcm}$} \ar[d]\\
\tn{$\mcm_{\mcf}$} \ar[r] & \mcm .
}\ee
   
Any one of  $\widehat{\mcm}_{\mcf}, \mcm_{\mcf}, \widehat{\mcm}, \mcm$ 
will be used as the formal moduli space of CMC surfaces as convenient.

\section{Complexified CMC surfaces}\label{sec:gCMC}
The purpose of this paper is to extend the CMC system 
 to the  CMC hierarchy of evolution equations 
by the higher-order  commuting symmetries generated by $\bY$.
It turns out that the deformations   induced by the CMC hierarchy
do not preserve exactly the class of CMC surfaces.
The structure equation shows that it is necessary to generalize and consider 
a class of \emph{complexified} CMC surfaces for the phase space of the CMC hierarchy.
 
To this end, we give in this section  a precise definition of the complexified 
CMC surfaces. It will be on the formal moduli space of such generalized CMC surfaces
that the CMC hierarchy will be realized as  the hierarchy of commuting flows.
 
\sub{Curvature of a $(1,1)$-form on a Riemann surface}
We first record a preliminary analysis on the curvature associated with 
a nowhere zero $(1,1)$-form on a Riemann surface.

\two
Let $\Sigma$ be a Riemann surface. 
Let $\Omega^{p,q}\to\Sigma$ be the bundle of $(p,q)$-forms.
Let $K=\Omega^{1,0}$ denote the canonical line bundle. 

Suppose $\Upsilon\in H^0(\Sigma, \Omega^{1,1})$ be a nowhere zero $(1,1)$-form.
At each point of $\Sigma$,
there exists a pair of $(1,0)$-form $\xi$ and $(0,1)$-form $\xib$ such that
$$\Upsilon=\frac{\im}{2}\xi\w\xib$$
(note that  $\xib$ is a notation for a $(0,1)$-form 
and does not necessarily mean the complex conjugate $\ol{(\xi)}$ ).
Such a pair of 1-forms $(\xi,\xib)$ is defined up to scaling by  
$$ (\xi,\xib) \to (s \xi, s^{-1}\xib),\quad s\in\C^*.
$$

Let $\pi:F_{\Upsilon}\to\Sigma$ be the associated principal $\C^*$-bundle.\footnotemark\footnotetext{Here $\C^*$ is considered as a multiplicative group.}
From the general theory of G-structures, \cite{Gardner1989},
let $(\xi, \xib)$ be  the tautological pair of 1-forms on $F_{\Upsilon}\to\Sigma$ 
(which we denote by the same notations) such that
$$\pi^*\Upsilon=\frac{\im}{2}\xi\w\xib.
$$
A standard equivalence method argument shows that 
there exists a unique torsion-free (complex) connection 1-form $\rho$ on $F_{\Upsilon}$ 
such that
\begin{align}
\ed\xi&= \;\;\im\rho\w\xi,\n\\
\ed\xib&=-\im\rho\w\xib.\n
\end{align}
The (scalar) curvature $R_{\Upsilon}$ of the $(1,1)$-form $\Upsilon$ is then defined by the equation
$$\ed\rho=R_{\Upsilon} \Upsilon.
$$

\sub{Complexified CMC surfaces}
With this preparation, we give a definition of the  complexified CMC surfaces.
\begin{defn}\label{defn:gCMC}
Let $\gamma^2\in\R$ be a given real structural constant.
A \tb{complexified} CMC surface 
consists of the triple of data $(\Sigma, \Upsilon, \Phi)$,
where $\Sigma$ is a Riemann surface,  
$\Upsilon\in H^0(\Sigma, \Omega^{1,1} )$ is a nowhere zero $(1,1)$-form, 
and $\Phi\in H^0(\Sigma, K^2 )$ is a holomorphic quadratic differential.
They must satisfy the following compatibility condition;
suppose we write (locally)
$$\Upsilon=\frac{\im}{2}\xi\w\xib,$$
for a $(1,0)$-form $\xi$ and a $(0,1)$-form $\xib$.
Let 
\begin{align}
\Phi&=h_2\xi^2,\n\\
\ol{\Phi}&=\hb_2\xib^2,\quad\tn{(complex conjugate of $\Phi$)}.\n
\end{align}
Here $h_2, \hb_2$ are the scalar coefficients
(note again that $\hb_2$ does not necessarily mean the complex conjugate $\ol{(h_2)}$).
Then,
\be\label{eq:gCMCcompat}
R_{\Upsilon}=\gamma^2-h_2 \hb_2.
\ee
Here $R_{\Upsilon}$ is the curvature of the $(1,1)$-form $\Upsilon$.

Let $\mcm^{\C}$ denote the 
\tb{formal moduli space} of the complexified CMC surfaces.
\end{defn}

A version of the classical  Bonnet theorem holds
and a complexified CMC surface admits a local embedding into 
a homogeneous space  of $\SO(4,\C)$.
We do not pursue to give the precise description of this space, nor
the related extrinsic geometry of a complexified CMC surface.
 
\sub{Local normal form}
The compatibility equation \eqref{eq:gCMCcompat} can be written in 
the familiar local normal form of (complex) sinh-Gordon equation.

\two
Away from the zero divisor (called ``umbilics") of $\Phi$, 
choose a local holomorphic coordinate $z$ on $\Sigma$
such that $$\Phi=(\ed z)^2.$$
Without loss of generality, let 
\be\label{eq:uzzb}
\xi=e^{u}\ed z,\; \xib=e^u\ed\zb
\ee
for a (complex) scalar function $u=u(z,\zb)$ such that
$$\Upsilon =e^{2u}\frac{\im}{2}\ed z\w\ed\zb.$$
By definition, we have
$$
h_2=\hb_2=e^{-2u}.$$

Differentiate \eqref{eq:uzzb}, and  the corresponding section of 
the torsion-free connection 1-form $\rho$ is given by
$$\rho=\im(u_z\ed z -u_{\zb}\ed \zb).$$
Here $u_z, u_{\zb}$ denote the partial derivatives, etc. 
Differentiate the given $\rho$ again, and the curvature $R_{\Upsilon}$ is given by
$$R_{\Upsilon}=-4e^{-2u}u_{z\zb}.$$
Eq.\eqref{eq:gCMCcompat} is now reduced to the   sinh-Gordon equation,
$$u_{z\zb}+\frac{1}{4}(\gamma^2 e^{2u}-e^{-2u})=0.
$$
\sub{Real involution}
Let $\mcm$ be the formal moduli space of ordinary  CMC surfaces.
In the analysis above, note that $u$ is real whenever $\xib=\ol{(\xi)}$ (complex conjugate).
We elaborate on this observation and give a geometric description of 
how $\mcm$ sits inside $\mcm^{\C}$.
  
\two
Let $(\Sigma,\Upsilon,\Phi)$ be a complexified CMC surface.
Consider the associated triple 
$$ (\Sigma,\ol{\Upsilon},\Phi).$$
From the definition, it is easily checked 
that the compatibility equation for this  triple is given by
(following the notations above)
\be\label{eq:RUpb} 
R_{\ol{\Upsilon}}=\gamma^2-\ol{(\hb_2)(h_2)}, \qquad (\tn{complex conjugation}).
\ee

On the other hand, by definition of the curvature of a $(1,1)$-form,  
$$R_{\ol{\Upsilon}}=\ol{R}_{\Upsilon}.
$$
Since $\gamma^2$ is real,
this implies that \eqref{eq:RUpb} holds 
and $(\Sigma,\ol{\Upsilon},\Phi)$ is also a complexified CMC surface.

As a result, the map 
\be\label{eq:tripple}
(\Sigma,\Upsilon,\Phi)\longmapsto (\Sigma,\ol{\Upsilon},\Phi)
\ee
defines an involution on  $\mcm^{\C}$;
$$\mathfrak{i}:\mcm^{\C}\to\mcm^{\C},\quad \mathfrak{i}^2=1_{\mcm^{\C}}.
$$
The fixed point loci of  the involution, i.e., the complexified CMC surfaces 
with the real $(1,1)$-form $\Upsilon=\ol{\Upsilon}$, then exactly correspond to 
the ordinary CMC surfaces.
\begin{prop}\label{prop:gCMC}
Let $\mcm$ be the formal moduli space of CMC surfaces, and
let $\mcm^{\C}$ be the formal moduli space of complexified CMC surfaces.  
There exists an involution $\mathfrak{i}:\mcm^{\C}\to\mcm^{\C}$ defined by \eqref{eq:tripple}
such that $\mcm=(\mcm^{\C})^{\mathfrak{i}}$ is the fixed point loci of \,$\mathfrak{i}$. 
In this sense,  $\mcm^{\C}$ is the complexification of $\mcm.$
\end{prop}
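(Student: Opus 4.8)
The plan is to establish the three assertions of Proposition~\ref{prop:gCMC} in turn: that $\mathfrak{i}$ is a well-defined self-map of $\mcm^{\C}$, that $\mathfrak{i}^2=1_{\mcm^{\C}}$, and that the fixed locus of $\mathfrak{i}$ is exactly $\mcm$. The first two points are already sketched in the paragraphs preceding the statement, so the proof mainly consists of recording that discussion carefully and checking compatibility with the identifications that define the formal moduli spaces.

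For well-definedness I would start with a complexified CMC surface $(\Sigma,\Upsilon,\Phi)$, write $\Upsilon=\frac{\im}{2}\xi\w\xib$ locally, and observe that $\ol{\Upsilon}=\frac{\im}{2}\,\ol{\xib}\w\ol{\xi}$, so that $(\ol{\xib},\ol{\xi})$ is an admissible $((1,0),(0,1))$-pair for $\ol{\Upsilon}$. Conjugating the structure equations $\ed\xi=\im\rho\w\xi$ and $\ed\xib=-\im\rho\w\xib$ shows that the (unique) torsion-free connection 1-form attached to $\ol{\Upsilon}$ relative to this pair is $\ol{\rho}$, and then $\ed\rho=R_{\Upsilon}\Upsilon$ yields the conjugation-naturality $R_{\ol{\Upsilon}}=\ol{R_{\Upsilon}}$. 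Expressing the fixed quadratic differential $\Phi$ and its conjugate $\ol{\Phi}$ in the new pair produces Hopf coefficients $\ol{\hb_2}$ and $\ol{h_2}$ in place of $h_2$ and $\hb_2$, so the compatibility condition \eqref{eq:gCMCcompat} for $(\Sigma,\ol{\Upsilon},\Phi)$ reads $R_{\ol{\Upsilon}}=\gamma^2-\ol{h_2\hb_2}$, which is \eqref{eq:RUpb}; since $\gamma^2\in\R$ this is precisely the complex conjugate of the compatibility condition for $(\Sigma,\Upsilon,\Phi)$ and therefore holds. Hence $\mathfrak{i}$ carries $\mcm^{\C}$ into itself. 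Because conjugation of forms commutes with biholomorphisms of $\Sigma$ and sends a rescaling $(\xi,\xib)\mapsto(s\xi,s^{-1}\xib)$ to the rescaling $(\ol{\xib},\ol{\xi})\mapsto(\ol{s}\,\ol{\xib},\ol{s}^{-1}\ol{\xi})$, the construction is independent of all the choices made and descends to the formal moduli space; and $\mathfrak{i}^2=1_{\mcm^{\C}}$ is immediate from $\ol{\ol{\Upsilon}}=\Upsilon$.

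For the fixed locus, $\mathfrak{i}(\Sigma,\Upsilon,\Phi)=(\Sigma,\Upsilon,\Phi)$ holds exactly when $\ol{\Upsilon}=\Upsilon$, i.e.\ $\Upsilon$ is a real $(1,1)$-form; in that case one may rescale the pair so that $\xib=\ol{(\xi)}$ genuinely, whence $\hb_2=\ol{(h_2)}$ and \eqref{eq:gCMCcompat} becomes $R_{\Upsilon}=\gamma^2-h_2\ol{(h_2)}$. Comparing with the Gau\ss\ equation $R=\gamma^2-h_2\hb_2$ recorded in \S\ref{sec:summary2} (in which $\hb_2=\ol{(h_2)}$ for an ordinary CMC surface), this is exactly the structure equation of an ordinary CMC surface, and conversely every ordinary CMC surface gives rise to such a real triple; therefore $\mcm=(\mcm^{\C})^{\mathfrak{i}}$. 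The one step requiring genuine care — and the one I would double-check most carefully — is the bookkeeping forced by the convention that $\xib$ denotes an \emph{abstract} $(0,1)$-form rather than the complex conjugate of $\xi$: one must keep straight which $(1,0)/(0,1)$ splitting and which Hopf coefficients are attached to $\ol{\Upsilon}$, and verify $R_{\ol{\Upsilon}}=\ol{R_{\Upsilon}}$ directly from the definition of the curvature of a $(1,1)$-form. Everything else in the argument is formal.
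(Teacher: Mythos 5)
Your argument is correct and follows essentially the same route as the paper: the paper likewise checks that $(\Sigma,\ol{\Upsilon},\Phi)$ satisfies the conjugated compatibility equation \eqref{eq:RUpb} by combining $R_{\ol{\Upsilon}}=\ol{R}_{\Upsilon}$ with the reality of $\gamma^2$, and identifies the fixed locus with the surfaces having real $(1,1)$-form, i.e.\ the ordinary CMC surfaces; you are simply filling in the routine verifications (the admissible pair $(\ol{\xib},\ol{\xi})$ for $\ol{\Upsilon}$, the connection $\ol{\rho}$, the swapped Hopf coefficients $\ol{\hb_2},\ol{h_2}$) that the paper leaves implicit. One cosmetic slip: the rescaling $(\xi,\xib)\mapsto(s\xi,s^{-1}\xib)$ induces $(\ol{\xib},\ol{\xi})\mapsto(\ol{s}^{-1}\ol{\xib},\ol{s}\,\ol{\xi})$, i.e.\ the parameter is $t=\ol{s}^{-1}$ rather than $\ol{s}$, but either way it has the admissible form $(t\,\cdot\,,t^{-1}\,\cdot\,)$, so the well-definedness argument is unaffected.
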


\sub{Remarks}
Let us make a few relevant remarks.
\benu[\qquad a)]
\item
Most of the  results of \cite{Wang2013} summarized in \S\ref{sec:summary},
including infinite prolongation, structure equations, formal Killing field,
Jacobi fields,  and conservation laws,  have their  obvious 
analogues for the complexified CMC surfaces.
One only needs to re-consider  the complex conjugation notation  (overline)
as the formal complex conjugation,   \S\ref{sec:formalconj}.
We leave the rest of  details of the necessary changes for the transition
from the CMC surfaces to the complexified CMC surfaces.

For simplicity, we use the same notations for the corresponding objects for the complexified CMC surfaces.
\item
For a complexified CMC surface, we generally have (following the notations above)
\be 
\quad\qquad \xib\ne\ol{(\xi)},\quad
 \rho\ne\ol{(\rho)},\quad
 \hb_2\ne\ol{(h_2)},\quad (\tn{complex conjugate)}   \n
\ee
and hence
$$\hb_j\ne\ol{(h_j)},\quad j\geq 2.$$
It follows that, on the infinite prolongation space 
$\finfh$ for the complexified CMC surfaces, 
the sequences of functions $\{ h_j \}$ and $\{\hb_j \}$ 
are independent.

We  call $\{ h_j, z_k\}$ and $\{\hb_j, \zb_k\}$
the functions of type $(1,0)$ and $(0,1)$ respectively.
  
\item
For example, there exist two canonical formal Killing fields for the generalized CMC surfaces,
$\bY$ of type $(1,0)$, and $-\ol{\bY}^t$ of type $(0,1)$.
They satisfy the structure equations,
\begin{align}\label{eq:YYbt}
\ed\bY+[\phi_{\lambda},\bY]&= 0,\\
\ed (-\ol{\bY}^t) +[\phi_{\lambda}, (-\ol{\bY}^t)]&= 0.\n
\end{align}
Here $\ol{\bY}$ should be understood as the formal complex conjugation  of $\bY$, 
\S\ref{sec:formalconj}.  For the second equation, note the formal symmetry,
$\phi_{\lambda}= -\ol{\phi_{\lambda}}^t.$
\enu

With these being understood,
a CMC surface would mean a complexified CMC surface from now on.

\section{Infinitely prolonged Gau$\ss$  map}\label{sec:YGauss}
Consider the Lie algebra decomposition,
$$\so(4,\C)=\sla(2,\C)\oplus\sla(2,\C).$$
One of the simplifying factors in the study of the CMC system is that, due to this decomposition, the entire analysis can be based on the simpler Lie algebra $\sla(2,\C)$. We utilize this  and  formulate the CMC hierarchy in terms of a twisted loop algebra $\bmg\subset\sla(2,\C)((\lambda)),$ \eqref{eq:twisted}.

In this section,
we give a description of the $\bm{\g}$-valued formal Killing field $\bY$ 
as a part of an infinitely prolonged version of Gau\ss\, map.
This interpretation will play a role in connecting  the CMC system 
to the AKS bi-Hamiltonian hierarchies on 
$ (-\ol{\bmg}^t, \bmg)$.

\sub{Twisted loop algebra} 
Let  $\g=\sla(2,\C).$  
Let
$$\g((\lambda)):=\{\tn{$\g$-valued formal Laurent series in $\lambda$}\}.
$$
Here $\lambda\in\C^*$ is the spectral parameter. 
Define the twisted loop algebra,
\begin{align} 
\bm{\g}&:=\left\{   \, h(\lambda)\in \g((\lambda))\;\vert\; \tn{$h^1_1(\lambda)=-h^2_2(\lambda)$ is even in $\lambda$; $h^1_2(\lambda), h^2_1(\lambda)$ are odd in $\lambda$}  \,   \right\} \label{eq:twisted}\\
&\;\subset\g((\lambda)).\n
\end{align}
Here $h^i_j(\lambda)$'s denote the components of $h(\lambda)$.

The formal Killing field $\bY$ for the CMC system, 
and the extended Maurer-Cartan form for the CMC hierarchy, etc, 
will either take values in $\bm{\g}$,
or at least satisfy the twistedness condition given in \eqref{eq:twisted}. 
\subb{Vector space decomposition}
Consider the vector space decomposition of $\bm{\g}$ into the subalgebras,
\be\begin{array}{rll}\label{eq:vsdecomp}
\bm{\g}&=\;\bm{\g}_{\leq -1} &+^{vs}\quad \bm{\g}_{\geq 0} \\
&\subset\;\g[\lambda^{-1}]\lambda^{-1}&+^{vs}\quad\g[[\lambda]]. 
\end{array}\ee
\noi
Here ``$+^{vs}$" means the direct sum as a vector space and not as a Lie algebra.
The notation  $\bm{\g}_{\leq -1}$ means the subalgebra of polynomial loops 
of $\lambda$-degree $\leq -1$, 
and $\bm{\g}_{\geq 0}$ similarly means the subalgebra of formal power series loops 
of $\lambda$-degree $\geq 0$.
\subb{Dual decomposition} 
We adopt the standard invariant inner product on $\bm{\g}$ defined by
\be\label{eq:inner}
\langle Y_1, Y_2\rangle:=\tn{Res}_{\lambda=0}\big(\tr(Y_1Y_2) \big),
\qquad Y_1, Y_2\in\bmg.
\ee
Here $\tn{Res}_{\lambda=0}$ is the residue operator  which takes the terms of $\lambda$-degree 0.
The corresponding   decomposition of $\bm{\g}=\bm{\g}^*$ dual to \eqref{eq:vsdecomp} is given by
\be\begin{array}{rll}\label{eq:vsdecompdual}
\bm{\g}
& =\;\bm{\g}_{\geq  1} &+^{vs}\quad \bm{\g}_{\leq 0}\\
&\subset\;\g[[\lambda]]\lambda &+^{vs}\quad\g[\lambda^{-1}].  \\
\end{array}\ee
\noi
Here $\bm{\g}_{\geq  1}$, $\bm{\g}_{\leq 0}$ denote the subalgebras which are defined similarly as above
according to their $\lambda$-degrees.
\subb{Determinantal subvarieties in $\pmb{\g}_{\geq 1}$} 
Recall $\C((\lambda))$ is the space of formal Laurent series in $\lambda$.
By definition, the product map $\C((\lambda))\times\C((\lambda))\to\C(\lambda))$ is well defined.
This implies that the determinant function 
$$\det:\bm{\g} \longmapsto\C((\lambda^2))$$
is also well defined. 

Recall the identity, \eqref{eq:b2c2Y},
\be\label{eq:Ydet}
\det(\bY)=-4\gamma\lambda^2\in\C[[\lambda^2]].
\ee
Set $P_{4\gamma\lambda^2}:\bm{\g} \mapsto\C((\lambda^2))$ 
be the defining function for $\bY$,
\be\label{eq:eqYP}
P_{4\gamma\lambda^2}(Y):=\det(Y)+4\gamma\lambda^2,\quad\tn{for $Y\in\bm{\g}$}.\n
\ee
Let $\tn{Y}_{ {4\gamma\lambda^2}}\subset\bm{\g}$ be the corresponding subvariety,
\be\label{eq:varietyYP}
\tn{Y}_{ {4\gamma\lambda^2}}:=\{Y\in\bm{\g}\;\vert\; P_{4\gamma\lambda^2}(Y)=0\}.
\ee

We record the following elementary property of 
the relevant subset $\tn{Y}_{ {4\gamma\lambda^2}}\cap \bm{\g}_{\geq 1}$ without proof.
\begin{lem}\label{lem:lemmaYP}
 Let $\tb{G}_{\geq 0}$ be the formal loop group with Lie algebra $\bmg_{\geq 0}$.
Then, under the adjoint action, $\tb{G}_{\geq 0}$ acts transitively on 
$\tn{Y}_{ {4\gamma\lambda^2}}\cap \bm{\g}_{\geq 1}$.
\end{lem}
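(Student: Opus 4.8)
The plan is to reduce the statement to a standard fact about conjugating polynomial loops in $\sla(2,\C)$ and then handle the twistedness and $\lambda$-degree constraints by a careful induction on $\lambda$-degree. First I would unwind what membership in $\tn{Y}_{4\gamma\lambda^2}\cap\bmg_{\geq 1}$ means concretely. An element $Y\in\bmg_{\geq 1}$ has the form $Y=\bp -\im\tb{a}&2\tb{c}\\2\tb{b}&\im\tb{a}\ep$ with $\tb{a}$ even in $\lambda$ of degree $\geq 2$ (since the diagonal must vanish to first order, or rather start at $\lambda^2$ by twistedness and the degree-$\geq 1$ condition forcing the odd off-diagonal entries to start at $\lambda$), $\tb{b},\tb{c}$ odd of degree $\geq 1$, and the constraint $\det(Y)=-\tb{a}^2-4\tb{b}\tb{c}=-4\gamma\lambda^2$; comparing lowest-order terms, this forces $\tb{b}=b^2\lambda+O(\lambda^3)$, $\tb{c}=c^2\lambda+O(\lambda^3)$ with $b^2c^2=\gamma$, exactly as for $\bY$ itself in \eqref{eq:b2c2}, \eqref{eq:b2c2Y}. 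Thus every such $Y$ has the same leading term structure as $\bY$ up to the scaling freedom in $(b^2,c^2)$, which is absorbed by conjugation by a constant diagonal element of $\tb{G}_{\geq 0}$ (this is the $s\in\C^*$ rescaling $(\xi,\xib)\mapsto(s\xi,s^{-1}\xib)$ at the loop-algebra level).

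Next I would set up the induction. Having normalized the leading term, suppose $Y$ and $\bY$ agree modulo $\lambda^{2N}$ for some $N\geq 1$; I want to produce $g\in\tb{G}_{\geq 0}$ of the form $g=\exp(\lambda^{2N-?}\zeta)$ (more precisely an exponential of a $\bmg_{\geq 0}$ element supported in the appropriate degree so that $\Ad_g$ changes $Y$ only in degrees $\geq$ the current one) with $\Ad_g Y\equiv\bY\bmod\lambda^{2N+2}$. The key algebraic point is that the infinitesimal action of $\bmg_{\geq 0}$ on the tangent space to $\tn{Y}_{4\gamma\lambda^2}$ at $\bY$, namely $\zeta\mapsto[\zeta,\bY]$, is surjective onto the relevant graded piece. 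This surjectivity is where the twisted loop algebra structure and the fact that $\det\bY$ has no zeros in $\lambda^2$ other than the prescribed double zero at $\lambda=0$ enter: $[\,\cdot\,,\bY]$ as a map of $\sla(2,\C)$-valued loops has kernel and cokernel governed by the centralizer of the leading symbol $\bY_{\mathrm{lead}}=\bp\cdot&2c^2\\2b^2&\cdot\ep\lambda$, which is a regular semisimple element of $\sla(2,\C)\otimes\lambda$, so the centralizer is one-dimensional (the diagonal torus) in each degree and $[\,\cdot\,,\bY]$ is an isomorphism on the complementary off-diagonal part. The tangent space to the determinant-level set is precisely cut out by one scalar condition per degree, matching the one-dimensional centralizer, so the count works out and $\Ad_{\bmg_{\geq 0}}$ fills out the tangent space; the torus directions correspond exactly to the residual scaling ambiguity already fixed.

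The main obstacle I anticipate is making the inductive step genuinely \emph{convergent} in the formal sense: one must check that composing the infinitely many correction factors $g_N$ yields a well-defined element of the \emph{formal} loop group $\tb{G}_{\geq 0}$, i.e. that the corrections $g_N$ live in higher and higher $\lambda$-degree so that the infinite product stabilizes in each fixed degree. This should follow because the $N$-th correction only alters coefficients of $\lambda$-degree $\geq 2N$ (it is an exponential of an element of $\bmg_{\geq 0}$ whose lowest term has degree $\geq 2N-1$ or so), but one has to track the degree bookkeeping precisely and confirm that solving the linear equation $[\zeta_N,\bY_{\mathrm{lead}}]=(\text{obstruction in degree }2N)$ does not require $\zeta_N$ of too-low degree. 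A secondary technical point is verifying that the $\zeta_N$ can be chosen to respect the twistedness condition \eqref{eq:twisted} (even diagonal, odd off-diagonal), which it can, since $\bY$ is twisted and the bracket of twisted elements is twisted, so the obstruction lives in the twisted subspace and one solves within it. Once these degree estimates are in place, the transitivity statement follows by taking $g=\cdots g_2g_1g_0$ and noting $\Ad_g Y=\bY$; surjectivity of the orbit map onto $\tn{Y}_{4\gamma\lambda^2}\cap\bmg_{\geq 1}$ is then immediate, and since $\bY$ itself lies in this set (by \eqref{eq:Ydet}), the action is transitive.
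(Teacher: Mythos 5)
The paper states this lemma explicitly ``without proof,'' so there is no argument of the author's to compare against; your proposal supplies the standard one, and it is essentially correct. The strategy --- normalize the leading $\lambda^1$-coefficient by the constant diagonal torus (using $b^2c^2=\gamma\neq 0$, so the leading term is regular semisimple), then kill the discrepancy degree by degree by conjugating with $\exp(\zeta_{N-1}\lambda^{N-1})$, the infinite product converging in the formal loop group because the corrections live in ever higher degree --- is the right one, and your identification of the two key points (surjectivity of $\ad$ onto the tangent space of the level set, and compatibility with the twisting) is accurate. Two small repairs. First, a sign slip: for $Y=\bp -\im\tb{a}&2\tb{c}\\ 2\tb{b}&\im\tb{a}\ep$ one has $\det Y=\tb{a}^2-4\tb{b}\tb{c}$, not $-\tb{a}^2-4\tb{b}\tb{c}$; this does not affect your conclusion $b^2c^2=\gamma$ since $\tb{a}^2=O(\lambda^4)$. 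Second, the phrase ``one scalar condition per degree, matching the one-dimensional centralizer'' deserves the parity bookkeeping to be made explicit, because the mechanism is different in the two parities: for $N$ odd the degree-$N$ discrepancy $\Delta_N$ is off-diagonal (a $2$-dimensional space), the linearized determinant condition $\tr(Y_1\Delta_N)=0$ is genuinely active and cuts it to the $1$-dimensional space $\ad_{Y_1}(\mathrm{diag})$, solved by a diagonal $\zeta_{N-1}\lambda^{N-1}$ (even power, hence twisted); for $N$ even $\Delta_N$ is diagonal ($1$-dimensional), the determinant condition is vacuous at that order since $\tr(\bY\Delta)$ is even in $\lambda$, and instead one uses that $\ad_{Y_1}$ maps the off-diagonal part onto the diagonal part with kernel exactly $\C Y_1$. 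With these two checks written out, the induction closes and the lemma follows.
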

\subb{Infinite sequence of quadratic constraints}
From Lem.\ref{lem:lemmaYP},
consider the restriction of the function $P_{4\gamma\lambda^2}$ to $\bmg_{\geq 1}$.
When expanded as a formal power series in $\lambda^2$, 
it gives rise to an infinite sequence of quadratic constraints
for the subvariety  $\tn{Y}_{ {4\gamma\lambda^2}}\cap\bm{\g}_{\geq 1}.$ 
This sequence of quadratic  functions will serve as the commuting Hamiltonians for the CMC hierarchy,
\S\ref{sec:AKS}.

\subsection{Zariski open sets}\label{sec:Zariski}
Recall the commutative diagram from \eqref{eq:diagramss},\two\\
\centerline{\xymatrix{\tn{$\Fh{\infty}$}\ar[d] \ar[r] &\; \;\tn{$\hat{X}$}^{(\infty)}\ar[d]  \\ 
 \mcf \ar[r]^{\SO(2)}  & X.  }}\\ 

\two\noi
Define the  Zariski open sets,
\begin{align}\label{eq:opensetsF}
\finfh_*&:=\finfh\setminus\bigcup\{ h_2  = 0, \infty\}\cup\{ \hb_2 = 0, \infty\},\\
\finfh_{**}&:=\finfh_*\setminus\bigcup_{j\geq 3}\{ h_j =\infty \}\cup\{ \hb_j =\infty \}.\n
\end{align}
The corresponding open subsets of $\xinfh$ are denoted by 
\be\label{eq:opensetsX}
\xinfh_*, \xinfh_{**} \subset \xinfh.
\ee

Recall 
$$\{ a^{2n+1},  h_2^{ \frac{1}{2}}b^{2n+2}, h_2^{-\frac{1}{2}}c^{2n+2} \}_{n\geq 0}\subset\mcr=
\C[z_3, z_4, \,... \, ].$$
It follows that the formal Killing fields 
$-\ol{\bY}^t, \bY$ are well defined and smooth on $\finfh_{**}.$
They can be considered as a map
$$(-\ol{\bY}^t, \bY) :\finfh_{**} \longmapsto  -\ol{\bmg_{\geq 1}}^t\times\bmg_{\geq 1}.$$ 

\sub{Infinitely prolonged Gau$\ss$\, map}
With this preparation, we give a definition of the infinitely prolonged version of Gau\ss\, map.

\two
Consider the diagram in Fig.\ref{fig:InfGauss2}. 
\begin{figure}[h]
\centerline{\xymatrix@R=3.5pc@C=3.5pc{ \tn{$\Fh{\infty}_{**}$}\;\ar[d]_{\pi} 
\;\ar[r]^(.27){ (-\ol{\bY}^t, \bY) } & \quad 
\Big(- \ol{\tn{Y}_{{4\gamma\lambda^2}}}^t\times \tn{Y}_{ {4\gamma\lambda^2}} \Big)
\cap  
\Big(-\ol{\bmg_{\geq 1}}^t\times\bmg_{\geq 1}\Big)\\
\tn{$\mcf$}  & }} 
\caption{Infinitely prolonged Gau\ss\, map}\label{fig:InfGauss2}
\end{figure}
\begin{defn}\label{defn:InfGauss2}
Let $\finfh_{**}\subset\Fh{\infty}$ be the Zariski open set  defined in \eqref{eq:opensetsF}.
The \tb{infinitely prolonged Gau$\ss$\, map} is 
defined by
$$(-\ol{\bY}^t,\bY):\Fh{\infty}_{**}\longmapsto  
\Big(- \ol{\tn{Y}_{{4\gamma\lambda^2}}}^t\times \tn{Y}_{ {4\gamma\lambda^2}} \Big)
\cap  
\Big(-\ol{\bmg_{\geq 1}}^t\times\bmg_{\geq 1}\Big).
$$
Each component $-\ol{\bY}^t,\bY$ satisfies the Killing field equation with respect to $\phi_{\lambda}$,
\begin{align}
\ed\bY+[\phi_{\lambda},\bY]&= 0,\n\\
\ed(-\ol{\bY}^t) +[\phi_{\lambda},(-\ol{\bY}^t)]&= 0.\n
\end{align}
\end{defn}

\one
A relevant observation is that, from the  construction, 
the horizontal map $(-\ol{\bY}^t,\bY)$ in Fig.\ref{fig:InfGauss2} 
is an isomorphism when  restricted to a fiber  of the projection $\pi$, 
which can be considered as the higher-order jet space of the CMC system.
Then  Lem.~\ref{lem:lemmaYP}  
implies that $\Fh{\infty}_{**}$ 
is  homogeneous under the combined action of the (formal) Lie groups
$$(\tn{Iso($M$)}^{(\infty)}, \,\ol{\bG_{\geq 0}}^t, \bG_{\geq 0}).
$$
Here $\tn{Iso($M$)}^{(\infty)}$ denotes the infinitely prolonged representation
of $\tn{Iso($M$)}$ in  $\tn{Diffeo}(\Fh{\infty}_{**})$.
\begin{cor}\label{cor:quasi}
The infinite jet space  $\Fh{\infty}$ of the CMC system  is a quasi-homogeneous variety
under the combined action of the Lie groups
$(\tn{Iso($M$)}^{(\infty)}, \,\ol{\bG_{\geq 0}}^t, \bG_{\geq 0}).$
\end{cor}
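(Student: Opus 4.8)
\textbf{Proof strategy for Corollary \ref{cor:quasi}.}
The plan is to deduce the corollary directly from the discussion preceding it, making the ``relevant observation'' into a precise argument. First I would make explicit what ``quasi-homogeneous'' is meant to assert: that there is a dense (here, Zariski dense) open subset of $\Fh{\infty}$ on which the combined group $(\tn{Iso}(M)^{(\infty)},\,\ol{\bG_{\geq 0}}^t,\bG_{\geq 0})$ acts transitively. The obvious candidate for this open subset is $\Fh{\infty}_{**}$ from \eqref{eq:opensetsF}, which is Zariski open and dense in $\Fh{\infty}$ by construction (it is the complement of the vanishing and polar loci of $h_2,\hb_2$ and the polar loci of the $h_j,\hb_j$, each a proper closed subvariety). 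So the statement reduces to: $\tn{Iso}(M)^{(\infty)}\times\ol{\bG_{\geq 0}}^t\times\bG_{\geq 0}$ acts transitively on $\Fh{\infty}_{**}$.

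The main step is then a fibration-and-base argument using the infinitely prolonged Gau\ss\ map of Fig.~\ref{fig:InfGauss2}. I would organize it as follows. The projection $\pi:\Fh{\infty}_{**}\to\mcf$ has the structure that a fiber is the higher-order jet space of the CMC system, coordinatized (after the normalizations of \S\ref{sec:summary2}) by the sequences $\{z_j,\zb_j\}_{j\ge 3}$; the horizontal map $(-\ol{\bY}^t,\bY)$ restricts on each fiber to an isomorphism onto the intersection $\big(-\ol{\tn{Y}_{4\gamma\lambda^2}}^t\times\tn{Y}_{4\gamma\lambda^2}\big)\cap\big(-\ol{\bmg_{\ge 1}}^t\times\bmg_{\ge 1}\big)$. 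This is because the recursive structure equations \eqref{eq:abcstrt}, together with the initial data \eqref{eq:b2c2}, express each coefficient $a^{2n+1},h_2^{1/2}b^{2n+2},h_2^{-1/2}c^{2n+2}$ as an explicit polynomial in $z_3,z_4,\dots$ (and their conjugates), and conversely one recovers the $z_j$'s from the coefficients of $\bY$; the only constraint on the image is the determinant identity $\det\bY=-4\gamma\lambda^2$, i.e.\ membership in $\tn{Y}_{4\gamma\lambda^2}$, so the fiberwise map is a bijection onto the indicated target. Next, Iso$(M)^{(\infty)}$ acts transitively on the base $\mcf=\tn{Iso}(M)$ (it acts on itself by translation), and this action is compatible with $\pi$. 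Finally, Lem.~\ref{lem:lemmaYP} (applied on each factor, using $\phi_\lambda=-\ol{\phi_\lambda}^t$ for the barred factor) says $\bG_{\geq 0}$, resp.\ $\ol{\bG_{\geq 0}}^t$, acts transitively under the adjoint action on $\tn{Y}_{4\gamma\lambda^2}\cap\bmg_{\ge 1}$, resp.\ on $-\ol{\tn{Y}_{4\gamma\lambda^2}}^t\cap(-\ol{\bmg_{\ge 1}}^t)$; transporting this through the fiberwise isomorphism $(-\ol{\bY}^t,\bY)$ shows these groups act transitively on each fiber of $\pi$. Combining transitivity on the base with transitivity on the fibers gives transitivity of the combined group on $\Fh{\infty}_{**}$, which is the claim.

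The step I expect to require the most care is the compatibility between the three actions: one must check that the adjoint action of $\bG_{\geq 0}$ on the loop-algebra side really is \emph{induced} by a (formal) action on $\Fh{\infty}_{**}$ that intertwines with $(-\ol{\bY}^t,\bY)$, i.e.\ that dressing by $\bG_{\geq 0}$ lifts through the Gau\ss\ map. Here one uses that $\bY$ satisfies the Killing field equation $\ed\bY+[\phi_\lambda,\bY]=0$: conjugating $\bY$ by a constant (in the base variables) element of $\bG_{\geq 0}$ produces another solution of a Killing field equation for a gauge-transformed $\phi_\lambda$, and one must verify this gauge transformation stays within the admissible class so that it corresponds to a genuine point of $\Fh{\infty}_{**}$ over the same point of $\mcf$ — this is precisely the content that makes the fiberwise isomorphism equivariant. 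A secondary subtlety is bookkeeping with the formal complex conjugation (\S on formal conjugation) so that the barred factor $\ol{\bG_{\geq 0}}^t$ genuinely acts on functions of type $(0,1)$ independently of the unbarred side; since for complexified CMC surfaces the sequences $\{h_j\}$ and $\{\hb_j\}$ are independent, the two actions commute and one really does get the product group acting. Once these compatibilities are in place the transitivity assembles formally, and I would present the argument at this level of detail rather than re-deriving the recursion \eqref{eq:abcstrt} or re-proving Lem.~\ref{lem:lemmaYP}.
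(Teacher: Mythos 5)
Your proposal is correct and follows essentially the same route as the paper: the paper's justification is precisely the ``relevant observation'' paragraph before the corollary, namely that $(-\ol{\bY}^t,\bY)$ is a fiberwise isomorphism over $\pi:\Fh{\infty}_{**}\to\mcf$, that Lem.~\ref{lem:lemmaYP} gives transitivity of $\bG_{\geq 0}$ (and its conjugate) on the fibers via the adjoint action, and that $\tn{Iso}(M)^{(\infty)}$ handles the base, with quasi-homogeneity of $\Fh{\infty}$ then coming from $\Fh{\infty}_{**}$ being Zariski open. Your additional remarks on making the dressing action equivariant through the Gau\ss\ map and on the formal conjugation bookkeeping go beyond what the paper spells out, but they fill in the same argument rather than replacing it.
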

The homogeneity of the infinite jet space  of  Drinfeld-Sokolov  hierarchies
 is originally due to Frenkel, \cite{Frenkel1995,Frenkel1998}. 


\section{Adler-Kostant-Symes bi-Hamiltonian hierarchy}\label{sec:AKS}
In this section, we digress from the CMC  system 
and give a description of the Adler-Kostant-Symes (AKS) bi-Hamiltonian hierarchy on $\bmg$ 
via \nR-matrix approach. 
For the general reference on R-matrices and  AKS hierarchies, 
we refer to \cite{Semenov2008}.

\subsection{\tn{R}-matrix}
Recall the vector space decomposition \eqref{eq:vsdecomp},
\be\begin{array}{rll}\label{eq:vsdecomp9}
\qquad\bm{\g}&=\;\bm{\g}_{\leq -1} &+^{vs}\quad \bm{\g}_{\geq 0}.  
\end{array}\ee
Let $\pi_{\leq -1},  \pi_{\geq 0}$ denote the respective projection maps.

Let 
$$\tn{R}:=-\pi_{\leq -1}+\pi_{\geq 0}:\bm{\g}\lra\bmg$$
be the corresponding $\tn{R}$-matrix.  
An R-matrix defines a new Lie bracket
$[\;,\,]_{\tn{R}}$ on $\bmg$ given by
$$[Y_1,Y_2]_{\tn{R}}:=\frac{1}{2}\left([\tn{R}(Y_1),Y_2]+[Y_1,\tn{R}(Y_2)]\right),
\qquad Y_1, Y_2\in\bmg.
$$
In the present case, the new Lie bracket $[\;,\,]_{\tn{R}}$ splits into the direct sum and 
one gets 
$$\bmg_{\tn{R}}=\ominus\bmg_{\leq -1}\oplus\bmg_{\geq 0} \quad\tn{(as a Lie algebra)}.$$
Here $\bmg_{\tn{R}}$ denotes the vector space $\bmg$ equipped with the new Lie bracket 
$[\;,\,]_{\tn{R}}$. The ``$\ominus$" sign indicates that the Lie bracket is given by
$$[(Y_-, Y_+), (Y'_-, Y'_+)]_{\tn{R}}=(-[Y_-,Y'_-], +[Y_+, Y'_+]),$$
for $(Y_-, Y_+), (Y'_-, Y'_+)\in \bmg_{\leq -1}+^{vs}\bmg_{\geq 0}.$

\subsection{Bi-Poisson structures} 
Let 
\begin{align}\label{eq:intert}
\bm{\sigma}_k&: \bmg\lra \bmg, \quad k\in 2\Z, \\
\bm{\sigma}_k(Y)&:=\lambda^k Y, \qquad Y\in\bmg, \n
\end{align}
be the sequence of intertwining operators.\ftmark\fttext{An intertwining operator is an endomorphism of a Lie algebra which commutes with the adjoint action.}  
Let 
$$\nR_k=\nR\circ\bsigma_k$$ 
be the corresponding sequence of $\nR$-matrices.

It is clear that
for any non-trivial finite linear combination $\bsigma=\sum_{k=i_1}^{i_2} c_k\bsigma_k$, we have that $\tn{Ker}(\bsigma)$ is trivial. 
Hence, the sequence of R-matrices $\{ \nR_k \}$ define a family of Lie algebra structures on $\bmg$.
As a consequence, they induce an infinite dimensional linear family of compatible Lie-Poisson structures on $\bmg^*=\bmg.$

For our purpose, the relevant bi-Poisson structures are given by the pairs of $\nR$-matrices
$$ (\nR, \nR_{\pm 2}).$$

\subsection{AKS bi-Hamiltonian hierarchy} 
Recall the dual decomposition \eqref{eq:vsdecompdual},
\be\begin{array}{rll}\label{eq:vsdecompdual9}
\bm{\g}^*=\bm{\g}
& =\;\bm{\g}_{\geq  1} &+^{vs}\quad \bm{\g}_{\leq 0}.
\end{array}\ee
Here $\bm{\g}_{\geq  1} = (\bmg_{\leq -1})^*$, and the co-adjoint action
of $X_-\in \bmg_{\leq -1}$ on $Y_+\in \bmg_{\geq 1}$ is given by 
$$[X_-, Y_+]_{\geq 1}.$$
The subscript  ``$_{\geq 1}$" denotes the $\bm{\g}_{\geq  1}$-component of the Lie bracket, etc.
  
With this preparation,  consider the following set of coadjoint  invariant Hamiltonian functions on 
$\bmg^*$,
$$\frac{1}{n}\tn{Res}_{\lambda=0}\big(  \lambda^{-m} \tn{tr}(Y^n)\big),
\quad n,m\in\Z.$$
Since $\bmg^*\subset\sla(2,\C)((\lambda))$, 
choose the set of nontrivial and functionally independent ones,
$$H_{m}:=-\left(\frac{1}{2\im}\right)\frac{1}{2}\tn{Res}_{\lambda=0}
\big(  \lambda^{(-2m-2)} \tn{tr}(Y^{2})\big),\quad m\geq 0$$
(the scaling constants are ornamental).
Its differential is given by
$$\ed H_m=-\left(\frac{1}{2\im}\right)\lambda^{-2m-2}Y\in \bmg=(\bmg^*)^*.$$

With respect to the Lie-Poisson structure on $\bmg_{\nR}^*\simeq\bmg_{\nR}$, 
the Hamiltonian equation for $H_m$ is given by (\cite[Theorem 2.5]{Semenov2008}) 
$$\frac{\ed Y}{\ed t_m}=-\tn{ad}^*_{\bmg}\tn{U}_m(Y),\quad m\geq 0,$$
where $t_m$ is the time variable, and the formula for the element 
$\tn{U}_m =\frac{1}{2}\nR(\ed H_m)\in \bmg$ is  
$$\tn{U}_m  =\frac{1}{2}\left(\frac{-1}{2\im}\right) \left( -(\lambda^{-2m-2}Y)_{\leq -1} +(\lambda^{-2m-2}Y)_{\geq 0}\right).$$

Under the identification $\bmg=\bmg^*$, we get
$$\frac{\ed Y}{\ed t_m}=-[\tn{U}_m,Y]=-[\tn{U}_m+\left(\frac{1}{4\im}\right)\lambda^{-2m-2}Y,Y].$$ 
Hence the Hamiltonian equation becomes
\be\label{eq:tmflow}
\frac{\ed Y}{\ed t_m}=-[(\frac{1}{2\im}\lambda^{-2m-2}Y)_{\leq -1},Y], \quad m\geq 0. 
\ee
 
\two
It is clear that the hierarchy of $t_m$-flows defined by Eq.\eqref{eq:tmflow} 
is bi-Hamiltonian with respect to the bi-Poisson structures $(\nR, \nR_{\pm 2})$. 
As a consequence, we obtain a commuting bi-Hamiltonian hierarchy of evolution equations on 
$\bmg.$
\begin{defn}\label{defn:AKS}
Let $\bmg\subset\sla(2,\C)((\lambda))$ be the twisted loop algebra \eqref{eq:twisted}.
The \tb{AKS hierarchy} on $\bmg$ is the sequence of commuting bi-Hamiltonian system of equations \eqref{eq:tmflow}.  
\end{defn}
Consequently, the resulting AKS hierarchy will involve the "time" variables 
$$\{  t_m\}_{m\geq 0}.$$
 
Since $\bY$ takes values in $\bmg_{\geq 1}$,
it suffices for the construction of the CMC hierarchy
to consider the restriction of the AKS hierarchy to the strictly positive part 
$$\bmg_{\geq 1}=(\bmg_{\leq -1})^*.$$ 

\subsection{Liouville tori}$\,$\newline
There exists an obvious family of Liouville tori for the AKS hierarchy.

\two
Let $P_q:\bmg\mapsto\C((\lambda^2))$ be a function defined by,
$$P_q(Y):=\det(Y)+q,\quad q\in\C((\lambda^2)).$$
Consider the corresponding determinantal variety defined by  
\be 
\tn{Y}_{ q} =\{\, P_q=0 \}  \subset \bmg.\ee
Note by definition that all the Hamiltonians $H_m$ are constant on this subvariety,
and $\tn{Y}_{ q}$ is clearly invariant under the flow \eqref{eq:tmflow}.
As the constant element $q\in\C((\lambda^2))$ varies, the set of subvarieties 
$\{ Y_{ q}\}$ forms an analogue of the Liouville foliation by invariant tori on $\bmg$
for the AKS hierarchy.

\two
Since $\det(\bY)=-4\gamma\lambda^2,$
the relevant subset for our analysis is  
$$\tn{Y}_{ {4\gamma\lambda^2}}\cap \bmg_{\geq 1}.$$
As noted earlier, $\tn{Y}_{ {4\gamma\lambda^2}} \cap \bmg_{\geq 1}$ 
is an adjoint orbit of the formal loop group 
$\bG_{\geq 0}$.
On the other hand, by construction of the AKS hierarchy, 
the trajectories of the CMC hierarchy
lie in the co-adjoint orbits of  ${\bG_{\leq -1}}$  at the same time.
Here $\bG_{\leq -1}$ denotes the loop group corresponding to 
the polynomial loop algebra $\bmg_{\leq -1}$.

\section{CMC hierarchy}\label{sec:CMChierarchy2}
In this section, the preceding analyses are combined 
to yield the structure equations for the CMC hierarchy.
 
The key observation is that  
the $\tba_0, t_0$-flows of the ($-\ol{\tn{AKS}}^t$, AKS)-hierarchies
on $(-\ol{\bmg_{\geq 1}}^t,\bmg_{\geq 1})$
are tangent to the infinitely prolonged Gau\ss\, map  $(-\ol{\bY}^t, \bY)$ respectively, 
Lem.\ref{lem:critical}. 
The formal symmetry   of the Maurer-Cartan form $\phi_{\lambda}$,
\be\label{eq:phiformalsymm}
\phi_{\lambda}=-\ol{\phi_{\lambda}}^t, 
\ee
then dictates that the proposed CMC hierarchy should be obtained by
attaching the pair of AKS hierarchies via $(-\ol{\bY}^t, \bY)$
to a combined system of equations on $-\ol{\bmg_{\geq 1}}^t\times\bmg_{\geq 1}$.
The original CMC system, which corresponds to the $\tba_0, t_0$-flows, 
serves as the connecting neck for the operation.
The $\g[[\lambda^{-1},\lambda]]$-valued extended Maurer-Cartan form
for the CMC hierarchy is given  by the formulas \eqref{eq:decompY}, \eqref{eq:bphi}. 
 
\sub{Construction plan}\label{sec:diagram}
Consider the diagram in Fig.\ref{fig:InfGauss2}.
Based on this, the construction plan for the CMC hierarchy can be summarized  
by the following diagram, Fig.\ref{fig:anatomy2}.
\begin{figure}[h]
\begin{align*}
  \tn{CMC hierarchy}&=
-\ol{\tn{mKdV hierarchy}}^t  \oplus\; \tn{CMC system} \;\oplus   \tn{mKdV hierarchy} \\  
  &\qquad\quad \overset{(-\ol{\bY}^t, \bY)  }{ \xrightarrow{\hspace*{1.5cm}} }   \quad
  -\ol{\tn{AKS hierarchy}}^t   \oplus  \tn{AKS hierarchy.}  
\end{align*}  
\caption{CMC hierarchy}\label{fig:anatomy2}
\end{figure}

Here the appearance of  the mKdV hierarchies is explained by fact that 
the  AKS hierarchy on $\bmg_{\geq 1}$ generated by the $t_m$-flows for $m\geq 0$ 
is a matrix representation of the  mKdV hierarchy, \cite{Wilson1985}.

\subb{Formal complex conjugation}\label{sec:formalconj}
A technical remark is in order.
Define the operation of formal complex conjugation ``$\ol{(\quad)}$" by,
\be
\bp    
\lambda^{\pm 1}\\h_j\\ t_m \\ \xi \\ \rho \ep \quad\xrightarrow{\tn{conjugation}} \quad
\bp  
\lambda^{\mp 1}\\ \hb_j\\ \tba_m \\ \xib \\ \rho \ep.
\ee
For example, the notation $-\ol{\bY}^t$ appeared above means the negative transpose of the formal complex conjugate of $\bY$.

\sub{Connecting neck} 
\subb{Decomposition of $\bY$}
For $m\geq 0$, set
\begin{align}\label{eq:decompY}
\frac{1}{2\im}\lambda^{-2m-2}\bY&=:U_m+U_{(m+1)} \\
&\subset \bmg_{\leq -1}+^{vs}\bmg_{\geq 0} \n
\end{align}
be the decomposition of the scaled canonical formal Killing field into the respective parts.
The $\bmg_{\leq -1}$-part $U_m$ is given explicitly by
\be
U_m =\bp -\im U_m^a &2U_m^c \\ 2U_m^b&\im U_m^a\ep,\n
\ee
where
\begin{align}\label{eq:Uabc}
U_m^a&=\frac{1}{2\im}\sum_{j=0}^m\lambda^{(2j+0)-(2m+2)}a^{2j+1}, \quad\quad
U_m^c =\frac{1}{2\im}\sum_{j=0}^m\lambda^{(2j+1)-(2m+2)}c^{2j+2},\\
U_m^b&=\frac{1}{2\im}\sum_{j=0}^m\lambda^{(2j+1)-(2m+2)}b^{2j+2}.\n
\end{align}
\subb{Key lemma}
Recall from \eqref{eq:phi2}, 
$$\phi_{\lambda}=\lambda\phi_++\phi_0+\lambda^{-1}\phi_-.$$
\begin{lem}\label{lem:critical}
Suppose,
\be\label{eq:torelation}
\ed \ol{t}_0 =-\frac{1}{2}\hb_2^{\frac{1}{2}}\xib, \quad \ed t_0 =-\frac{1}{2}h_2^{\frac{1}{2}}\xi.
\ee
Under this relation,
\be\label{eq:critical}
\lambda \phi_+=-\ol{U}^t_0\ed \ol{t}_0,\quad\lambda^{-1}\phi_-=U_0\ed t_0.
\ee
\end{lem}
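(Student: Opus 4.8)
The plan is to verify the two identities in \eqref{eq:critical} by direct computation, comparing the matrix entries on both sides. First I would unwind the definitions: from \eqref{eq:decompY} with $m=0$ we have $\frac{1}{2\im}\lambda^{-2}\bY = U_0 + U_{(1)}$, and $U_0$ is the $\bmg_{\leq -1}$-part. Since $\bY$ takes values in $\bmg_{\geq 1}$ with lowest term $\lambda^1$ (from \eqref{eq:tbY}, \eqref{eq:b2c2}: $\bba$ starts at $\lambda^0$ but the off-diagonal $\bbb,\bbc$ start at $\lambda^1$, while $\bba$'s contribution $-\im\bba$ starts at $\lambda^0$ — wait, $\bY$ has a $\lambda^0$ diagonal term $-\im a^1 = 0$, so effectively $\bY$ starts at $\lambda^1$), the scaled field $\frac{1}{2\im}\lambda^{-2}\bY$ has terms of $\lambda$-degree $\geq -1$, and its strictly-negative part $U_0$ consists of exactly the $\lambda^{-1}$ terms. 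Reading off \eqref{eq:Uabc} at $m=0$: $U_0^a = \frac{1}{2\im}\lambda^{-2}a^1 = 0$, $U_0^c = \frac{1}{2\im}\lambda^{-1}c^2$, $U_0^b = \frac{1}{2\im}\lambda^{-1}b^2$. Using the initial data \eqref{eq:b2c2}, namely $b^2 = -\im\gamma h_2^{-1/2}$ and $c^2 = \im h_2^{1/2}$, this gives
$$U_0 = \bp \cdot & 2U_0^c \\ 2U_0^b & \cdot \ep = \lambda^{-1}\bp \cdot & \tfrac{1}{2\im}\cdot 2\im h_2^{1/2} \\ \tfrac{1}{2\im}\cdot 2(-\im\gamma h_2^{-1/2}) & \cdot \ep = \lambda^{-1}\bp \cdot & h_2^{1/2} \\ -\gamma h_2^{-1/2} & \cdot \ep.$$

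Next I would multiply by $\ed t_0 = -\frac{1}{2}h_2^{1/2}\xi$ from \eqref{eq:torelation}:
$$U_0\,\ed t_0 = \lambda^{-1}\bp \cdot & h_2^{1/2} \\ -\gamma h_2^{-1/2} & \cdot \ep\cdot\big(-\tfrac{1}{2}h_2^{1/2}\big)\xi = \lambda^{-1}\bp \cdot & -\tfrac{1}{2}h_2 \\ \tfrac{1}{2}\gamma & \cdot \ep\xi,$$
which is exactly $\lambda^{-1}\phi_-$ by the formula for $\phi_-$ in \eqref{eq:phi2}. This settles the second identity. For the first, $\lambda\phi_+ = -\ol{U}_0^t\,\ed\ol{t}_0$, I would apply the formal complex conjugation of \S\ref{sec:formalconj} to the relation just proved: conjugating $\lambda^{-1}\phi_- = U_0\,\ed t_0$ sends $\lambda^{-1}\mapsto\lambda$, $h_j\mapsto\hb_j$, $\xi\mapsto\xib$, $t_0\mapsto\ol{t}_0$, and $\rho\mapsto\rho$; combined with the transpose this turns $\phi_-$ into $-\ol{\phi_-}^t = \lambda\phi_+$ (using $\phi_\lambda = -\ol{\phi_\lambda}^t$ from \eqref{eq:phiformalsymm}, which at the level of graded pieces reads $\lambda\phi_+ = -\ol{\lambda^{-1}\phi_-}^t$) and turns $U_0\,\ed t_0$ into $-\ol{U}_0^t\,\ed\ol{t}_0$ up to the sign bookkeeping of the transpose on the $2\times 2$ matrix. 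Alternatively one can simply redo the entry-by-entry computation with the conjugate data $\ol{b}^2 = -\im\gamma\hb_2^{-1/2}$, $\ol{c}^2 = \im\hb_2^{1/2}$, take the negative transpose, multiply by $\ed\ol{t}_0 = -\frac{1}{2}\hb_2^{1/2}\xib$, and match against $\lambda\phi_+ = \lambda\bp\cdot & -\tfrac{1}{2}\gamma \\ \tfrac{1}{2}\hb_2 & \cdot\ep\xib$ from \eqref{eq:phi2}.

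The computation is entirely routine once the definitions are aligned, so there is no serious obstacle; the only point requiring care is the bookkeeping of signs and transposes in passing between $U_0$ and $-\ol{U}_0^t$, and making sure that the $\lambda$-degree truncation defining $U_0$ (the $\bmg_{\leq -1}$-part of $\frac{1}{2\im}\lambda^{-2}\bY$) really does pick out only the $\lambda^{-1}$ coefficient — which it does because $\bY$'s expansion \eqref{eq:tbY} begins at $\lambda$-degree $1$ in all entries (the degree-$0$ diagonal entry vanishes since $a^1 = 0$). I would therefore present the proof as the short explicit matrix computation above for the $t_0$ relation, and then obtain the $\ol{t}_0$ relation either by the same computation with conjugated data or by invoking the formal symmetry \eqref{eq:phiformalsymm} together with the conjugation rules of \S\ref{sec:formalconj}.
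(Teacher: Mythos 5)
Your computation is correct and is exactly the direct verification the paper intends: the paper states Lem.~\ref{lem:critical} without proof, and your explicit form $U_0=\lambda^{-1}\bp \cdot & h_2^{1/2}\\ -\gamma h_2^{-1/2}&\cdot\ep$ agrees with the expression the paper records later in \S\ref{sec:dh2}, after which matching against $\phi_\pm$ from \eqref{eq:phi2} and using the formal conjugation of \S\ref{sec:formalconj} is routine. Your care about the $\lambda$-degree truncation (that $U_0$ reduces to the $\lambda^{-1}$ coefficient because $a^1=0$) is the only subtle point, and you handle it correctly.
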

Note that the consistency of Eqs.\eqref{eq:torelation} imposes 
the following constraints on the proposed CMC hierarchy,
\be\label{eq:consistency}
\ed (\hb_2^{\frac{1}{2}}\xib)=0,\qquad \ed(h_2^{\frac{1}{2}}\xi)=0.
\ee
\begin{cor}
The infinitely prolonged Gau\ss\, map 
$(-\ol{\bY}^t, \bY)$ is tangent to the $\tba_0, t_0$-flows 
of the ($-\ol{\tn{AKS}}^t$, AKS) hierarchies on $(-\ol{\bmg_{\geq 1}}^t,\bmg_{\geq 1})$
respectively.
\end{cor}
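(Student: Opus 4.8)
The Corollary should fall out of Lemma~\ref{lem:critical} once the two Killing field equations of Definition~\ref{defn:InfGauss2} are contracted in the right directions. First I would pin down what ``tangent to the $(\tba_0,t_0)$-flows'' means. By \S\ref{sec:summary2} the $(1,0)$-form $\omega=\sqrt{\ff}=h_2^{\frac12}\xi$ is already closed, $\ed\omega=0$, and likewise $\ol\omega$; hence the consistency conditions \eqref{eq:consistency} hold identically on the CMC system, and $t_0:=-\tfrac12\int\omega$, $\tba_0:=-\tfrac12\int\ol\omega$ are locally well-defined functions realizing \eqref{eq:torelation}, namely $\ed t_0=-\tfrac12 h_2^{\frac12}\xi$ and $\ed\tba_0=-\tfrac12\hb_2^{\frac12}\xib$. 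Let $V$ be the vector field (along the integral leaves, extended over the jet directions by zero) normalized by $V\lrcorner\,\ed t_0=1$, $V\lrcorner\,\xib=0$, $V\lrcorner\,\rho=0$ — equivalently $V\lrcorner\,\xi=-2h_2^{-\frac12}$ — and let $W$ be the analogous field dual to $\tba_0$. ``Tangency'' then means precisely that $\partial\bY/\partial t_0:=V\lrcorner\,\ed\bY$ coincides with the AKS $t_0$-flow field \eqref{eq:tmflow} evaluated at $Y=\bY$, and symmetrically $W\lrcorner\,\ed(-\ol{\bY}^t)$ equals the $-\ol{\tn{AKS}}^t$ $\tba_0$-flow field at $Y=-\ol{\bY}^t$.

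\textbf{The flow fields.} Next I would compute those two target flow vectors. Specializing \eqref{eq:tmflow} to $m=0$ and recalling from \eqref{eq:decompY} that $(\tfrac1{2\im}\lambda^{-2}\bY)_{\leq-1}=U_0$, the $t_0$-flow of the AKS hierarchy on $\bmg_{\geq 1}$ is $\ed Y/\ed t_0=-[U_0,Y]$ at $Y=\bY$. Since the $-\ol{\tn{AKS}}^t$-hierarchy on $-\ol{\bmg_{\geq 1}}^t$ is by construction the image of the AKS hierarchy under the formal conjugate-transpose $Y\mapsto-\ol{Y}^t$, $t_m\mapsto\tba_m$ (this is the symmetry \eqref{eq:phiformalsymm}), its $\tba_0$-flow field at $Y=-\ol{\bY}^t$ is $-[-\ol{U_0}^t,-\ol{\bY}^t]$.

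\textbf{The core computation.} Now rewrite the first Killing equation of Definition~\ref{defn:InfGauss2} as $\ed\bY=[\bY,\phi_\lambda]$ and decompose $\phi_\lambda=\lambda\phi_++\phi_0+\lambda^{-1}\phi_-$. Reading off \eqref{eq:phi2}, $\lambda\phi_+$ is a pure multiple of $\xib$, $\phi_0$ a pure multiple of $\rho$, and $\lambda^{-1}\phi_-$ a pure multiple of $\xi$. Contracting with $V$, which annihilates $\xib$ and $\rho$, only the last term survives: $V\lrcorner\,\ed\bY=[\bY,\;V\lrcorner(\lambda^{-1}\phi_-)]$. By Lemma~\ref{lem:critical}, $\lambda^{-1}\phi_-=U_0\,\ed t_0$, and $V\lrcorner\,\ed t_0=1$, so $V\lrcorner(\lambda^{-1}\phi_-)=U_0$ and $\partial\bY/\partial t_0=[\bY,U_0]=-[U_0,\bY]$, which is exactly the $t_0$-flow field identified above. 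The same contraction of $\ed(-\ol{\bY}^t)=[-\ol{\bY}^t,\phi_\lambda]$ with $W$ keeps only the $\lambda\phi_+$ term; Lemma~\ref{lem:critical} gives $\lambda\phi_+=-\ol{U_0}^t\,\ed\tba_0$, hence $W\lrcorner\,\ed(-\ol{\bY}^t)=[-\ol{\bY}^t,-\ol{U_0}^t]=-[-\ol{U_0}^t,-\ol{\bY}^t]$, the $\tba_0$-flow field. This proves the Corollary.

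\textbf{Where the difficulty lies.} There is no genuine obstacle at the level of the Corollary: it is a formal consequence of Lemma~\ref{lem:critical}, which is assumed. The one point deserving care is the bookkeeping of the formal complex conjugation when transporting everything onto the $-\ol{\bmg_{\geq 1}}^t$ factor, so that the identity $\lambda\phi_+=-\ol{U_0}^t\,\ed\tba_0$ is paired with the \emph{second} Killing equation rather than the first. I would also flag explicitly that the auxiliary relations \eqref{eq:torelation} are legitimate here only because $\sqrt{\ff}$ and $\sqrt{\ol{\ff}}$ are already closed on the CMC system (\S\ref{sec:summary2}); in the full hierarchy this closedness is no longer automatic and reappears as the imposed constraint \eqref{eq:consistency}. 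For completeness one can note that Lemma~\ref{lem:critical} itself is the one-line matrix check: with $b^2=-\im\gamma h_2^{-\frac12}$, $c^2=\im h_2^{\frac12}$ from \eqref{eq:b2c2} and $\gamma$ real, $U_0=\bp 0&\lambda^{-1}h_2^{\frac12}\\ -\gamma\lambda^{-1}h_2^{-\frac12}&0\ep$, and multiplying by $\ed t_0=-\tfrac12 h_2^{\frac12}\xi$ reproduces $\lambda^{-1}\phi_-$ on the nose, the $\phi_+$ identity following by conjugate-transpose.
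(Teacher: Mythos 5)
Your proposal is correct and is exactly the argument the paper intends: the Corollary is stated without proof as an immediate consequence of Lemma~\ref{lem:critical}, obtained precisely by contracting the Killing field equations with the vector fields dual to $\ed t_0$, $\ed\tba_0$ and matching $-[U_0,\bY]$ against the $m=0$ case of the AKS flow \eqref{eq:tmflow}. Your explicit computation of $U_0$ and your remark that \eqref{eq:torelation} is consistent on the CMC system because $\ed(h_2^{1/2}\xi)=0$ there are both accurate and in line with the paper.
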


\sub{Extended Maurer-Cartan form}\label{sec:822}
Motivated by this and the relation \eqref{eq:phiformalsymm}, 
set the $\g[[\lambda^{-1},\lambda]]$-valued extended Maurer Cartan form $\bphi$ by
\begin{align}\label{eq:bphi}
\bphi&:=-\sum_{m=1}^{\infty}\ol{U}^t_m \ed\ol{t}_m+\phi_{\lambda}
+\sum_{m=1}^{\infty} U_m\ed t_m,\\
&\;=-\sum_{m=0}^{\infty}\ol{U}^t_m \ed\ol{t}_m
+\phi_{0}+\sum_{m=0}^{\infty} U_m\ed t_m.\n
\end{align}
Note that $\bphi$ satisfies the twistedness condition given in
\eqref{eq:twisted}. Note also   the formal identity,
\be\label{eq:bphiformalsymm}
\bphi=-\ol{\bphi}^t.
\ee
\subb{Extended structure equation}
 The resulting structure equations for the CMC hierarchy are\ftmark\fttext{The set of equations \eqref{eq:extstrt} is sometimes referred to as the central system, \cite{Falqui1998}.} :
\begin{align}\label{eq:extstrt}
&\ed\bY+[\bphi,\bY]=0,\quad \ed(-\ol{\bY}^t) +[ \bphi,(-\ol{\bY}^t)]=0,\\
&\ed\bphi+\bphi\w\bphi=0.\n
\end{align}
We claim that this system of equations is compatible.

 \subsection{Assembly}\label{sec:Assembly}
In order to check the consistency of the resulting set of structure equations, we wish to extract a subset of generating equations for \eqref{eq:extstrt}.   
In particular, we are interested in the extension (deformation) of the structure equations for the objects
$$\{ \xi, \xib, \rho, h_2, \hb_2 \}.$$
In a sense, these structure equations are the connection for the assembly. 
The rest of the   equations shall be accounted for 
by the extended Killing field equations for $-\ol{\bY}^t, \bY$.
 
\two
In view of the analysis above, and by imposing the condition that 
the  deformations induced by the CMC hierarchy 
are  conformal  and   preserve Hopf differential \ftmark,\fttext{\S\ref{sec:remarkHopf}.}
we propose the following ansatz for the extension of the structure equations for 
$\{ \xi, \xib, \rho, h_2, \hb_2 \}$: 
\be
\tag*{Eq.($\xi$)}\label{Eqbphi}\left\{
\begin{array} {rl}
\ed\xi-\im\rho\w\xi     &=\sum_{m=1}^{\infty} a^{2m+3}\ed t_m\w\xi, \\
\ed\xib+\im\rho\w\xib &=\sum_{m=1}^{\infty} \ab^{2m+3}\ed \tba_m\w\xib, \n \\
\ed\rho &\equiv R\frac{\im}{2}\xi\w\xib  \qquad \mod \ed\bbt, \ed\ol{\bbt},\\ 
\ed h_2+2\im h_2\rho&=h_3\xi -2 \sum_{m=1}^{\infty}h_2 a^{2m+3}\ed t_m,\\
\ed \hb_2-2\im \hb_2\rho &=\hb_3\xib-2 \sum_{m=1}^{\infty}\hb_2 \ab^{2m+3}\ed \tba_m.
\end{array}\right.\ee

Let us rewrite the extended Killing field equations,
\be\label{EqbY}
\ed\bY+[\bphi,\bY]=0, \qquad
\ed(-\ol{\bY}^t)+[\bphi,(-\ol{\bY}^t)]=0. \tag*{Eq.(\bY)}
\ee
\be\label{Eqbphi2}
\ed\bphi+\bphi\w\bphi=0. \tag*{Eq.($\bphi$)}
\ee

\two
The claim is that,
\benu[\qquad a)] 
\item  \ref{Eqbphi} and \ref{EqbY} imply \ref{Eqbphi2},
\item \ref{Eqbphi} and \ref{EqbY} are compatible, i.e., the identity $\ed^2=0$ is a formal consequence of these equations.
\enu 
The proof is postponed to \S\ref{sec:proof}.
  
\sub{Remarks}
\subb{Conformal, and preserving Hopf differential}\label{sec:remarkHopf}
Note  \ref{Eqbphi} implies $\ed (h_2^{\frac{1}{2}}\xi)=0, \ed (\hb_2^{\frac{1}{2}}\xib)=0.$
Hence,  the  deformations induced by the CMC hierarchy are  
\tb{conformal} and  \tb{preserving Hopf differential}.

\subb{Well definedness of \;$[\pmb{\phi},\bY], \,\pmb{\phi}\w\pmb{\phi}$}  
Although the extended Maurer-Cartan form $\bphi$ takes values in $\g[[\lambda^{-1},\lambda]]$, note that each coefficient of the 1-forms $\rho, \ed t_m, \ed\tba_m, m\geq 0,$ in $\bphi$ is $\g[\lambda^{-1},\lambda]$-valued. Since the multiplication map
$$\C[\lambda^{-1},\lambda] \times \C((\lambda)) \lra \C((\lambda))$$
is well defined, the structure equations \ref{Eqbphi}, \ref{EqbY}make sense.

\subb{Commuting symmetries}\label{sec:commusymm}
Note that the extended structure equations  \ref{Eqbphi}, \ref{EqbY}  reduce  to 
the original CMC system if we set\ftmark\fttext{Here 
``$\ed\ol{\tb{t}}, \ed \tb{t}\equiv 0$" means ``modulo $\ed \tba_m,\ed t_m, \forall\, m\geq 1$".} 
$$\ed\ol{\tb{t}}, \ed \tb{t}\equiv 0.$$
Hence, the compatibility  implies that 
the CMC hierarchy induces  a pair of commuting hierarchies of formal symmetry vector fields 
$\{\del_{\tba_m}\}_{m=0}^{\infty}, \{\del_{t_m}\}_{m=0}^{\infty}$ 
(formally dual to $\{\ed\tba_m\}_{m=0}^{\infty}, \{\ed t_m\}_{m=0}^{\infty}$)
on the moduli space $\widehat{\mcm}_{\mcf},$ \eqref{eq:formalmoduli}.

\subb{Differential system}\label{sec:hierarchysystem}
Consider the product space 
$$\finfh_+:=\finfh\times\{\tba_n\}_{n\geq 0}\times\{ t_m\}_{m\geq 0}.
$$
Let $\iinfh_+$ be the differential ideal on  $\finfh_+$
which cut out the equations \ref{Eqbphi}, \ref{EqbY}.

Strictly speaking, the equality signs in \ref{Eqbphi}, \ref{EqbY}, \ref{Eqbphi2}
should be replaced with  ``$\equiv \mod \iinfh_+$".
For simplicity, we omit this. 
The meaning will be clear from the context.

\subb{Affine Toda equation}
The preceding analysis shows that the CMC system (or $\sinh$-Gordon equation) arises as the compatibility equation to join a pair of AKS hierarchies. From the Frenkel's work \cite{Frenkel1998}, it is evident that such a characterization exists for the general affine Toda field equations.

\two
Before we proceed to the proof of compatibility,
we translate the structure equations for the CMC hierarchy 
 into the original $\so(4,\C)$-setting.

\section{Translation into $\so(4,\C)$-setting} \label{sec:CMChierarchy4}
Recall the  $\so(4,\C)[\lambda^{-1},\lambda]$-valued Maurer-Cartan form $\psi_{\lambda}$ \eqref{eq:psi4original}, and the corresponding $\so(4,\C)[[\lambda]]\lambda$-valued formal Killing field $\tb{X}$, \eqref{eq:tbX}.
In order to define the extension of $\psi_{\lambda}$, 
we introduce the deformation coefficients $V_m$ analogous to $U_m$ for $\bphi$. 

\two
For each $m\geq 1$, define the $\so(4,\C)$-valued function
$V_m$ depending on the pairity of $m$ as follows. Here we set $a^{-1}=0$.

\two
\noi [\tb{case} $m$ is even]\\
\indent Define
$$ \epsilon(m)=\begin{cases} 
+1 &\mbox{if } m \equiv 0 \\ 
-1 & \mbox{if } m \equiv 2  \end{cases} \pmod{4}.$$
Let
\begin{align}\label{eq:Vabce}
V_m^{\tb{a}_1}&=  \epsilon(m)\sum_{j=0}^{\frac{m}{2}} (-1)^{j}\lambda^{(4j-2)-(2m+2)}a^{4j-1},\qquad
V_m^{\tb{a}_3} = \epsilon(m)\sum_{j=0}^{\frac{m}{2}} (-1)^{j}\lambda^{(4j+0)-(2m+2)}a^{4j+1}, \\
V_m^{\tb{b}_2}&= \epsilon(m)\sum_{j=0}^{\frac{m}{2}} (-1)^{j}\lambda^{(4j-1)-(2m+2)}b^{4j+0},\qquad
V_m^{\tb{b}_4} = \epsilon(m)\sum_{j=0}^{\frac{m}{2}} (-1)^{j}\lambda^{(4j+1)-(2m+2)}b^{4j+2},\n\\
V_m^{\tb{c}_2}&= \epsilon(m)\sum_{j=0}^{\frac{m}{2}} (-1)^{j}\lambda^{(4j-1)-(2m+2)}c^{4j+0},\qquad
V_m^{\tb{c}_4} = \epsilon(m)\sum_{j=0}^{\frac{m}{2}} (-1)^{j}\lambda^{(4j+1)-(2m+2)}c^{4j+2}.\n 
\end{align}
 
\two
\noi [\tb{case} $m$ is odd]\\
\indent Define
$$ \epsilon(m)=\begin{cases} 
-1 &\mbox{if } m \equiv 1 \\ 
+1 & \mbox{if } m \equiv 3 \end{cases} \pmod{4}.$$
Let
\begin{align}\label{eq:Vabco}
V_m^{\tb{a}_1}&= \epsilon(m)\sum_{j=1}^{\frac{m+1}{2}} (-1)^{j}\lambda^{(4j-4)-(2m+2)}a^{4j-3},\qquad
V_m^{\tb{a}_3} = \epsilon(m)\sum_{j=1}^{\frac{m+1}{2}} (-1)^{j}\lambda^{(4j-2)-(2m+2)}a^{4j-1},\\
V_m^{\tb{b}_2}&= \epsilon(m)\sum_{j=1}^{\frac{m+1}{2}} (-1)^{j}\lambda^{(4j-3)-(2m+2)}b^{4j-2},\qquad
V_m^{\tb{b}_4} = \epsilon(m)\sum_{j=1}^{\frac{m+1}{2}} (-1)^{j}\lambda^{(4j-1)-(2m+2)}b^{4j+0},\n\\
V_m^{\tb{c}_2}&= \epsilon(m)\sum_{j=1}^{\frac{m+1}{2}} (-1)^{j}\lambda^{(4j-3)-(2m+2)}c^{4j-2},\qquad
V_m^{\tb{c}_4} = \epsilon(m)\sum_{j=1}^{\frac{m+1}{2}} (-1)^{j}\lambda^{(4j-1)-(2m+2)}c^{4j+0}.\n 
\end{align}

\two
Now set
\be
V_m =\bp 
\cdot& \im(V_m^{\tb{c}_2}+V_m^{\tb{b}_4})&-(V_m^{\tb{c}_2}-V_m^{\tb{b}_4})&- V_m^{\tb{a}_3}\\
-\im(V_m^{\tb{c}_2}+V_m^{\tb{b}_4})& \cdot &-\im V_m^{\tb{a}_1}&-\im(V_m^{\tb{b}_2}+V_m^{\tb{c}_4})\\
(V_m^{\tb{c}_2}-V_m^{\tb{b}_4})& \im V_m^{\tb{a}_1}&\cdot &-(V_m^{\tb{b}_2}-V_m^{\tb{c}_4})\\
V_m^{\tb{a}_3}& \im(V_m^{\tb{b}_2}+V_m^{\tb{c}_4})& (V_m^{\tb{b}_2}-V_m^{\tb{c}_4})&\cdot  \ep.
\ee
Define the $\so(4,\C)[[\lambda^{-1},\lambda]]$-valued extended  Maurer-Cartan form $\bpsi$ by
\be\label{eq:extpsi4}
\bpsi:=-\sum_{m=1}^{\infty}\ol{V}^t_m\ed \tba_m+\psi_{\lambda}+\sum_{m=1}^{\infty}V_m\ed t_m.
\ee

Note the formal identity
$$\bpsi=-\ol{\bpsi}^t.$$
The corresponding extended structure equations are:
\begin{align}\label{eq:psiXeq}
\ed\tb{X}+[\bpsi,\tb{X}]&=0,\quad  \ed(-\ol{\tb{X}}^t)+[\bpsi,(-\ol{\tb{X}}^t)]=0, \\
\ed\bpsi+\bpsi\w\bpsi&=0.\n
\end{align}
It can be checked that these equations are equivalent to \eqref{eq:extstrt}.
 
\section{Proof of compatibility}\label{sec:proof}
Let us first rewrite the compatibility equation \ref{Eqbphi2}  
 in such a way that is suitable for the computation in this section.

Consider the decomposition
\begin{align}
\bphi&=-\sum_{n=0}^{\infty}\ol{U}^t_n\ed\tba_n+\phi_0+\sum_{m=0}^{\infty} U_m\ed t_m\n\\
&:=\bphi_++\phi_0+\bphi_-.\n
\end{align}
The $\bphi_+$-terms have $\lambda$-degree $\geq 1$, and 
the $\bphi_-$-terms have $\lambda$-degree $\leq -1$.
The  $\phi_0$-term, \eqref{eq:phi2}, has $\lambda$-degree 0.

In terms of this decomposition,  
\ref{Eqbphi},   \ref{EqbY}, \ref{Eqbphi2} can be organized as follows.
\be\tag{A}\label{eq:A}\\
\left\{
\begin{array} {rl}
\ed\xi-\im\rho\w\xi     &=\sum_{m=1}^{\infty} a^{2m+3}\ed t_m\w\xi, \\
\ed\xib+\im\rho\w\xib &=\sum_{m=1}^{\infty} \ab^{2m+3}\ed \tba_m\w\xib, \n \\
\ed\rho &\equiv R\frac{\im}{2}\xi\w\xib   \qquad \mod \ed\bbt, \ed\ol{\bbt}, \\
\ed h_2+2\im h_2\rho&=h_3\xi -2 \sum_{m=1}^{\infty}h_2 a^{2m+3}\ed t_m,\\
\ed \hb_2-2\im \hb_2\rho &=\hb_3\xib-2 \sum_{m=1}^{\infty}\hb_2 \ab^{2m+3}\ed \tba_m.
\end{array}\right.\ee

\begin{align} 
& \Big(\ed \bphi_++\bphi_+\w\bphi_+ + [\phi_0,\bphi_+]+[\bphi_+,\bphi_-]_{\oplus} \Big)=0,
 \tag{B$_{\oplus}$}\label{eq:Bp} \\
& \Big(\ed \phi_0 +[\bphi_+,\bphi_-]_{0}\Big)  =0,
\tag{B$_0$}\label{eq:B0} \\
& \Big(\ed \bphi_-+\bphi_-\w\bphi_- + [\phi_0,\bphi_-]+[\bphi_-,\bphi_+]_{\ominus}\Big)=0.\n \tag{B$_{\ominus}$}\label{eq:Bm}
\end{align}

\[\ed\bY+[\bphi,\bY]=0,\quad \ed(-\ol{\bY}^t)+[\bphi,(-\ol{\bY}^t)]=0.\tag{C}\label{eq:C}
\]

\two\noi
Here the equation $\ed\bphi+\bphi\w\bphi=0$
 is decomposed into the three parts
\eqref{eq:Bp},\eqref{eq:B0},\eqref{eq:Bm} according to their $\lambda$-degrees.
The subscripts ``$_{\oplus}$,  $_0$,  $_{\ominus}$" denote 
the terms of $\lambda$-degree $\geq 1$, $=0$, $\leq -1$ respectively.

\two
We now state the main theorem of this paper.
\begin{thm}\label{thm:main}
The system of equations Eq.\eqref{eq:A},  Eq.\eqref{eq:C} for the CMC hierarchy
is compatible, i.e., $\ed^2=0$ is a formal consequence of the structure equations. 
\end{thm}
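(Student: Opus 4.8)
The plan is to argue in two stages: first, that \eqref{eq:A} and \eqref{eq:C} together force the Maurer--Cartan equation \ref{Eqbphi2}, equivalently its three $\lambda$-graded pieces \eqref{eq:Bp}, \eqref{eq:B0}, \eqref{eq:Bm}; and second, that once \ref{Eqbphi2} is in hand the identity $\ed^2=0$ is a purely formal consequence of \eqref{eq:A}, \eqref{eq:C} and \ref{Eqbphi2} via the Jacobi identity. For the first stage I would exploit the formal conjugation symmetry $\bphi=-\ol{\bphi}^t$ of \eqref{eq:bphiformalsymm}: the operation $X\mapsto-\ol{X}^t$ exchanges \eqref{eq:Bp} with \eqref{eq:Bm} and fixes \eqref{eq:B0}, and it also exchanges the two equations of \eqref{eq:C}, so it suffices to establish the $\lambda$-degree $\leq -1$ piece \eqref{eq:Bm} and the $\lambda$-degree $0$ piece \eqref{eq:B0}.

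For \eqref{eq:Bm} I would write $\bphi_-=\sum_{m\geq 0}U_m\,\ed t_m$ with $U_m=\big(\tfrac{1}{2\im}\lambda^{-2m-2}\bY\big)_{\leq -1}$, where $\ed t_0=-\tfrac12 h_2^{\frac12}\xi$ is closed by \eqref{eq:consistency} and the $\ed t_m$ for $m\geq1$ are coordinate differentials, and then sort \eqref{eq:Bm} by type of $2$-form. The $\ed t_m\w\ed t_\ell$ coefficient is the AKS zero-curvature relation $\del_{t_m}U_\ell-\del_{t_\ell}U_m+[U_m,U_\ell]=0$; this holds because contracting \ref{EqbY} against $\del_{t_m}$ gives the AKS $t_m$-flow $\del_{t_m}\bY=-[U_m,\bY]$ of \eqref{eq:tmflow}, so the required relation follows from the Jacobi identity once one knows that $\ad\bY$ is injective along the Liouville torus $\tn{Y}_{4\gamma\lambda^2}\cap\bmg_{\geq 1}$, which is the content of Defn.\ref{defn:AKS} together with the normalization $\det\bY=-4\gamma\lambda^2$. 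The remaining $\ed t_m\w\xi$, $\ed t_m\w\xib$ and $\ed t_m\w\ed\tba_\ell$ coefficients of \eqref{eq:Bm}, which combine $[\phi_0,\bphi_-]$, the cross term $[\bphi_-,\bphi_+]_{\ominus}$ and the $\xi,\xib,\rho$-directional derivatives of the $U_m$, I would verify using \eqref{eq:A}, the recursion \eqref{eq:abcstrt} for $a^j,b^j,c^j$, and Lem.\ref{lem:critical}; the essential bookkeeping is that every such bracket lands in $\lambda$-degree $\leq -1$ because $U_m\in\bmg_{\leq -1}$ while $\phi_\lambda$ carries only $\lambda$-degrees $\{-1,0,+1\}$. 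Equation \eqref{eq:B0} is then the single identity governing $\ed\rho$: its $\tfrac{\im}{2}\xi\w\xib$ component reproduces the stated $\ed\rho\equiv R\tfrac{\im}{2}\xi\w\xib$ of \eqref{eq:A}, while its $\ed t_m$- and $\ed\tba_m$-terms — the part of $\ed\rho$ left unspecified in \eqref{eq:A} — are pinned down as the $\lambda^0$ component of $[\bphi_+,\bphi_-]$, read off from the explicit formulas \eqref{eq:Uabc}.

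Given \ref{Eqbphi2}, the second stage is routine. Differentiating \ref{EqbY} and using $\ed\bphi=-\bphi\w\bphi$, $\ed\bY=-[\bphi,\bY]$ gives $[\ed\bphi,\bY]-[\bphi,\ed\bY]=-[\bphi\w\bphi,\bY]+[\bphi,[\bphi,\bY]]$, which vanishes by the standard identity $[\bphi\w\bphi,\bY]=[\bphi,[\bphi,\bY]]$; differentiating \ref{Eqbphi2} gives $\ed(\bphi\w\bphi)=(\ed\bphi)\w\bphi-\bphi\w(\ed\bphi)=-(\bphi\w\bphi)\w\bphi+\bphi\w(\bphi\w\bphi)=0$ by associativity; and each scalar equation of \eqref{eq:A} is recovered by reading off a matrix component of \ref{Eqbphi2} (using \ref{EqbY} to identify the $h_j$-derivatives), so its exterior derivative follows from the two previous computations. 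Running the identical argument through the isomorphism $\so(4,\C)=\sla(2,\C)\oplus\sla(2,\C)$ then yields the compatibility of the $\so(4,\C)$-form \eqref{eq:psiXeq} asserted in \S\ref{sec:CMChierarchy4}.

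I expect the genuine obstacle to be the $\lambda$-degree $\leq -1$ piece \eqref{eq:Bm}, and within it the terms coupling the CMC directions $\xi,\xib,\rho$ to the new time differentials $\ed t_m$. The abstract AKS commutativity disposes of the pure $\ed t_m\w\ed t_\ell$ part, but one must still confirm that the specific truncations $U_m$ of \eqref{eq:Uabc} — not the untruncated $\tfrac{1}{2\im}\lambda^{-2m-2}\bY$ — introduce no spurious contributions and that the mixed terms cancel against $[\phi_0,\bphi_-]$ and $[\bphi_+,\bphi_-]_{\ominus}$. This is precisely the ``connecting neck'' computation of \S\ref{sec:Assembly}: it is the all-$m$ extension of Lem.\ref{lem:critical}, and it is the point where the two roles of $\bY$ — as a $\phi_\lambda$-Killing field for the CMC system and simultaneously as the seed of the AKS hierarchy — are shown to be mutually compatible.
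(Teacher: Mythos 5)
Your overall architecture --- splitting $\ed\bphi+\bphi\w\bphi=0$ into its three $\lambda$-graded pieces, reducing to \eqref{eq:Bm} and \eqref{eq:B0} by the conjugation symmetry $\bphi=-\ol{\bphi}^t$, recognizing the $\ed t_m\w\ed t_\ell$ coefficients as the AKS commutation relations, and deriving Eq.\eqref{eq:A} afterwards --- is the paper's. But the justification you offer for the central identity $\del_{t_m}U_\ell-\del_{t_\ell}U_m+[U_m,U_\ell]=0$ (Lem.~\ref{lem:tmUtl}) does not work. You invoke ``injectivity of $\ad\bY$ along the Liouville torus,'' but $\ad\bY$ is never injective: $\bY$ itself lies in its kernel, and since $\det\bY=-4\gamma\lambda^2\neq 0$ makes $\bY$ regular, the kernel inside $\bmg$ is the full line $\C((\lambda^2))\,\bY$; neither Defn.~\ref{defn:AKS} nor the normalization \eqref{eq:b2c2Y} supplies anything of the sort. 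Moreover, even with some kernel control, the Jacobi-identity computation you have in mind only shows that $\ad_{\bY}$ applied to the curvature expression equals the commutator of mixed $t$-partials of $\bY$ --- and the vanishing of that commutator is part of the compatibility being proven, so the argument is circular.

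The correct mechanism, which the paper uses, needs no injectivity: write $\tfrac{1}{2\im}\lambda^{-2m-2}\bY=U_m+U_{(m+1)}$ with $U_m\in\bmg_{\leq-1}$, $U_{(m+1)}\in\bmg_{\geq0}$, substitute into $\del_{t_\ell}\bY=-[U_\ell,\bY]$, antisymmetrize in $m,\ell$, and subtract the trivial identity $[U_m+U_{(m+1)},\,U_\ell+U_{(\ell+1)}]=0$; the resulting equation is a sum of a term in $\bmg_{\leq-1}$ and a term in $\bmg_{\geq0}$, so each vanishes separately, and the first is exactly \eqref{eq:tmUtl}. The same degree-projection trick settles the $\ed t_m\w\ed\tba_n$ cross terms in one line (Lem.~\ref{lem:tmUtn}): project $\del_{\tba_{n}}(U_m+U_{(m+1)})=[\ol{U}^t_{n},U_m+U_{(m+1)}]$ onto $\bmg_{\leq-1}$ and note that $[\ol{U}^t_{n},U_{(m+1)}]$ has $\lambda$-degree $\geq 1$. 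Finally, the pieces you defer as ``the genuine obstacle'' --- the couplings of $\ed t_m$ to $\xi,\xib,\rho$ and the recovery of the $\ed h_2$, $\ed\xi$ formulas --- are precisely where the paper's remaining work lies (\S\ref{sec:ddrho}, \S\ref{sec:dh2}, \S\ref{sec:dxi}: the cancellation in $\ed\eqref{eq:B0}$, the extraction of $\ed h_2$ from the $\lambda$-degree $-1$ component of \eqref{eq:tlUm} together with $[\bY,\bY]=0$, and $\ed\xi$ from $\xi=-2h_2^{-1/2}\ed t_0$); saying you ``would verify'' them leaves the proof open at exactly the point you correctly identify as hardest.
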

 
Note that the compatibility equation of Eq.\eqref{eq:C} is Eqs.\eqref{eq:Bp},\eqref{eq:B0},\eqref{eq:Bm}.
For a proof of  the theorem, 
we first show that
Eqs.\eqref{eq:Bp},\eqref{eq:B0},\eqref{eq:Bm} vanish  
modulo Eq.\eqref{eq:A}, Eq.\eqref{eq:C}.
Then,
we check that
Eq.\eqref{eq:A}  is compatible with
Eqs.\eqref{eq:Bp},\eqref{eq:B0},\eqref{eq:Bm}, and Eq.\eqref{eq:C}.
  
\sub{Eqs.\eqref{eq:Bp},\eqref{eq:Bm}}\label{sec:Bpm}
The claim is that
$$\tn{ Eqs.\eqref{eq:Bp},\eqref{eq:Bm} $\equiv 0\mod$ Eq.\eqref{eq:A}, Eq.\eqref{eq:C}}.$$
This follows from the commuting property of the AKS  bi-Hamiltonian hierarchy.
We record a proof for completeness.
\subb{$\ed t_m\w\ed t_{\ell}, \ed \tba_m\w\ed \tba_{\ell}$-terms}\label{sec:dtmtl}
It is clear that this part of the claim is equivalent to the following lemma 
and its formal complex conjugate.
\begin{lem}\label{lem:tmUtl} 
For all $m,\ell \geq 0$,
\be\label{eq:tmUtl} 
\del_{t_m}U_{\ell}-\del_{t_{\ell}}U_m+[U_m,U_{\ell}]=0.  
\ee
\end{lem}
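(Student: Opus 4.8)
The plan is to transcribe the classical zero-curvature (commutativity) argument for the Adler--Kostant--Symes flows, \cite[\S2]{Semenov2008}, into the twisted loop algebra $\bmg$. Write $M_m:=\frac{1}{2\im}\lambda^{-2m-2}\bY$, and recall from \eqref{eq:decompY} the vector-space splitting $M_m=U_m+U_{(m+1)}$ relative to \eqref{eq:vsdecomp}, so that $U_m=\pi_{\leq-1}(M_m)\in\bmg_{\leq-1}$ and $U_{(m+1)}=\pi_{\geq0}(M_m)\in\bmg_{\geq0}$. First I would read off, from the coefficient of $\ed t_m$ in the Killing field equation Eq.\eqref{eq:C} — using \eqref{eq:bphi}, and Lem.~\ref{lem:critical} when $m=0$, to identify the $\ed t_m$-coefficient of $\bphi$ with $U_m$ — the AKS evolution
\[\del_{t_m}\bY=-[U_m,\bY],\qquad m\geq0,\]
which is \eqref{eq:tmflow} applied to $Y=\bY$. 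Since multiplication by the scalar $\lambda^{-2\ell-2}$ and the projection $\pi_{\leq-1}$ are $\C$-linear and involve none of the variables $t_m$, they commute with $\del_{t_m}$; multiplying through by $\frac{1}{2\im}\lambda^{-2\ell-2}$ and applying $\pi_{\leq-1}$ gives $\del_{t_m}U_\ell=-\pi_{\leq-1}[U_m,M_\ell]$ for all $m,\ell\geq0$.

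The only extra algebraic input is the bracket identity coming from $[\bY,\bY]=0$. Since $M_m,M_\ell$ are scalar multiples of $\bY$, one has $[M_m,M_\ell]=0$; expanding $M_m=U_m+U_{(m+1)}$, $M_\ell=U_\ell+U_{(\ell+1)}$ and applying $\pi_{\leq-1}$ — using that $\bmg_{\leq-1}$ and $\bmg_{\geq0}$ are subalgebras, so $[U_m,U_\ell]\in\bmg_{\leq-1}$ is fixed by $\pi_{\leq-1}$ while $[U_{(m+1)},U_{(\ell+1)}]\in\bmg_{\geq0}$ is annihilated — yields
\[[U_m,U_\ell]+\pi_{\leq-1}[U_m,U_{(\ell+1)}]+\pi_{\leq-1}[U_{(m+1)},U_\ell]=0.\]
Substituting $M_\ell=U_\ell+U_{(\ell+1)}$ into $\del_{t_m}U_\ell=-\pi_{\leq-1}[U_m,M_\ell]$ and using this identity rewrites the flow compactly as $\del_{t_m}U_\ell=\pi_{\leq-1}[U_{(m+1)},U_\ell]$. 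Subtracting the formula with $m$ and $\ell$ interchanged, and invoking the identity once more, produces
\[\del_{t_m}U_\ell-\del_{t_\ell}U_m=-[U_m,U_\ell],\]
which is \eqref{eq:tmUtl}. The formal complex conjugate statement — the $\ed\tba_m\w\ed\tba_\ell$-part also needed in \S\ref{sec:dtmtl} — then follows by applying the conjugation of \S\ref{sec:formalconj}.

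I do not expect a genuine obstacle: once the flow equation $\del_{t_m}\bY=-[U_m,\bY]$ is in hand, the remainder is purely the Lie-algebraic commutativity of the AKS hierarchy on $\bmg$ (equivalently of the matrix mKdV hierarchy, \cite{Wilson1985}), and the classical computation goes through verbatim in the twisted setting; well-definedness of the brackets and projections is automatic since $M_\ell$ has only finitely many negative powers of $\lambda$. The one point deserving a sentence of care is that appealing to Eq.\eqref{eq:C} here is not circular: in this step of the proof of Thm.~\ref{thm:main} one is deriving Eqs.\eqref{eq:Bp},\eqref{eq:B0},\eqref{eq:Bm} \emph{from} Eq.\eqref{eq:A} and Eq.\eqref{eq:C}, so the Killing field equation Eq.\eqref{eq:C} is among the hypotheses.
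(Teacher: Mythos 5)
Your proposal is correct and follows essentially the same route as the paper's proof: the flow equation $\del_{t_m}\bY=-[U_m,\bY]$ read off from Eq.\eqref{eq:C}, the auxiliary bracket identity obtained by expanding $[\bY,\bY]=0$ (the paper's Lem.~\ref{lem:Y2}), and a separation by $\lambda$-degree. The only difference is presentational — you apply $\pi_{\leq-1}$ from the outset and track only the $\bmg_{\leq-1}$-component, whereas the paper carries the full equation and splits it into its $\lambda$-degree $\leq-1$ and $\geq0$ parts at the last step.
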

\begin{proof}
We give a proof  by $\lambda$-degree counting.

\tb{Step} 1.\,
Recall the decomposition
\be\label{eq:YUm}
\bY=2\im\lambda^{2m+2} (U_m+U_{(m+1)}).
\ee
From Eq.\eqref{eq:C}, we have
\be\label{eq:tellY}
\del_{t_{\ell} } \tb{Y}=-[U_{\ell},\tb{Y}].
\ee
Here $\del_{t_{\ell} }=\dd{}{t_{\ell} }$ denotes the partial derivative operator.

\one
\tb{Step} 2.\,
Substitute  \eqref{eq:YUm} to \eqref{eq:tellY}, and one gets
\begin{align}  
 \del_{t_{\ell}}U_m+ \del_{t_{\ell}}U_{(m+1)}
&=- [U_{\ell},U_m+U_{(m+1)}]   \label{eq:tlUm}  \\
&=- [U_{\ell},U_m]- [U_{\ell},U_{(m+1)}].\n
\end{align}
Interchange $\ell, m$ and take the difference, and one gets
\begin{align}\label{eq:longeq}
 (\del_{t_{m}}U_{\ell}-\del_{t_{\ell}}U_m)
&+( \del_{t_m}U_{(\ell+1)}- \del_{t_{\ell}}U_{(m+1)}) \\
&+ (2[U_m,U_{\ell}])
+( [U_m,U_{(\ell+1)}]- [U_{\ell},U_{(m+1)}])=0.\n
\end{align}
\begin{lem}\label{lem:Y2}
For all $m, \ell\geq 0$,
\be\label{eq:UUpa}
 [U_m,U_{\ell}]
+ ([U_m,U_{(\ell+1)}]- [U_{\ell},U_{(m+1)}])
+ [U_{(m+1)},U_{(\ell+1)}]=0.
\ee
\end{lem}
\begin{proof} 
This follows from the trivial identity,
$$[\bY,\bY]=0=[ U_m+U_{(m+1)},  U_\ell+U_{(\ell+1)}].
$$
\end{proof}
\tb{Step} 3.\,
Substitute  \eqref{eq:UUpa} to  \eqref{eq:longeq}, and one gets
\begin{align}
\Big( (\del_{t_{m}}U_{\ell}&-\del_{t_{\ell}}U_m)+[U_m,U_{\ell}]\Big) \\
&+\Big((\del_{t_m}U_{(\ell+1)}-\del_{t_{\ell}}U_{(m+1)})
-[U_{(m+1)},U_{(\ell+1)}]\Big)=0.\n
\end{align}
In this equation, 
the $\lambda$-degree of the first line is $\leq -1$,
whereas the $\lambda$-degree of the second line is $\geq 0$.
It follows that the equation above holds separately, i.e.,
\begin{align}
&(\del_{t_{m}}U_{\ell}-\del_{t_{\ell}}U_m)+[U_m,U_{\ell}]=0,\n\\
&(\del_{t_m}U_{(\ell+1)}-\del_{t_{\ell}}U_{(m+1)})
-[U_{(m+1)},U_{(\ell+1)}]=0.  \label{eq:secondeq}
\end{align}
This completes the proof of Lem.~\ref{lem:tmUtl}.
\end{proof}
 
\subb{$\ed t_m\w\ed \tba_{n}$-terms}
Similarly as above, 
the claim is equivalent to the following lemma.
\begin{lem}\label{lem:tmUtn} 
For all $m, n\geq 0$,
\be
\del_{t_m}\ol{U}^t_{n}+\del_{\tba_{n}}U_m
+[U_m,\ol{U}^t_{n}]_{\oplus}+[U_m,\ol{U}^t_{n}]_{\ominus}=0.\n
\ee
\end{lem}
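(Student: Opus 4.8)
The plan is to run the same $\lambda$-degree-counting argument used for Lemma~\ref{lem:tmUtl}, the only new bookkeeping being that two formal Killing fields, $\bY$ and its formal conjugate $-\ol{\bY}^t$, now enter. First I would record the two decompositions
\[
\bY=2\im\lambda^{2m+2}\bigl(U_m+U_{(m+1)}\bigr),\qquad
-\ol{\bY}^t=2\im\lambda^{-2n-2}\bigl(\ol{U}^t_n+\ol{U}^t_{(n+1)}\bigr),
\]
the first being \eqref{eq:decompY} and the second obtained from it by applying formal complex conjugation ($\lambda\mapsto\lambda^{-1}$, $\im\mapsto-\im$, $t_k\mapsto\tba_k$) followed by negative transpose. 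Here $U_m\in\bmg_{\leq-1}$, $U_{(m+1)}\in\bmg_{\geq0}$, while $\ol{U}^t_n\in\bmg_{\geq1}$ and $\ol{U}^t_{(n+1)}\in\bmg_{\leq0}$; more precisely, from \eqref{eq:Uabc} the $\lambda$-degrees of $U_m$ lie in $[-2m-1,-1]$ and those of $\ol{U}^t_n$ in $[1,2n+1]$. These degree windows are what the argument will rest on.

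Next I would extract the two flows relevant to the mixed term from Eq.\eqref{eq:C}. The $\ed\tba_n$-component of $\bphi$ is $-\ol{U}^t_n$, giving $\del_{\tba_n}\bY=[\ol{U}^t_n,\bY]$; the $\ed t_m$-component of $\bphi$ is $U_m$, giving $\del_{t_m}(-\ol{\bY}^t)=-[U_m,-\ol{\bY}^t]$. Since multiplication by $\lambda^k$ and the splitting $\bmg=\bmg_{\leq-1}+^{vs}\bmg_{\geq0}$ commute with $\del_{\tba_n}$, applying $\tfrac{1}{2\im}\lambda^{-2m-2}(\,\cdot\,)_{\ominus}$ to the first equation yields $\del_{\tba_n}U_m=-[U_m,\ol{U}^t_n]_{\ominus}+[\ol{U}^t_n,U_{(m+1)}]_{\ominus}$, and the second term vanishes because $[\ol{U}^t_n,U_{(m+1)}]$ has $\lambda$-degree $\geq1$ (degree $\geq1$ times degree $\geq0$). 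Symmetrically, applying $\tfrac{1}{2\im}\lambda^{2n+2}(\,\cdot\,)_{\oplus}$ to $\del_{t_m}(-\ol{\bY}^t)=-[U_m,-\ol{\bY}^t]$ yields $\del_{t_m}\ol{U}^t_n=-[U_m,\ol{U}^t_n]_{\oplus}-[U_m,\ol{U}^t_{(n+1)}]_{\oplus}$, and now $[U_m,\ol{U}^t_{(n+1)}]$ has $\lambda$-degree $\leq-1$ (degree $\leq-1$ times degree $\leq0$), so its $\oplus$-part is zero. Adding the two surviving identities
\[
\del_{\tba_n}U_m+[U_m,\ol{U}^t_n]_{\ominus}=0,\qquad
\del_{t_m}\ol{U}^t_n+[U_m,\ol{U}^t_n]_{\oplus}=0
\]
gives exactly the asserted equation, for all $m,n\geq0$.

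I do not expect a real obstacle here, and in fact the mixed case is lighter than Lemma~\ref{lem:tmUtl}: there one needs the auxiliary identity of Lemma~\ref{lem:Y2} coming from $[\bY,\bY]=0$, whereas here the asymmetry between the $t_m$-side (everything of $\lambda$-degree $\leq-1$) and the $\tba_n$-side (everything of $\lambda$-degree $\geq1$) forces the two potentially obstructing commutators $[\ol{U}^t_n,U_{(m+1)}]$ and $[U_m,\ol{U}^t_{(n+1)}]$ to be of pure positive, respectively pure negative, $\lambda$-degree, so no analogue of $[\bY,\bY]=0$ is required. The one point to handle carefully is the conjugation-plus-transpose bookkeeping producing the decomposition of $-\ol{\bY}^t$ and the exact $\lambda$-degree ranges of $U_m,\ol{U}^t_n,U_{(m+1)},\ol{U}^t_{(n+1)}$ read off from \eqref{eq:Uabc} and \eqref{eq:twisted}, since the vanishing of the two cross terms is entirely a degree count.
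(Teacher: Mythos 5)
Your proof is correct and takes essentially the same route as the paper: both reduce the claim to the two identities $\del_{\tba_n}U_m=[\ol{U}^t_n,U_m]_{\ominus}$ and $\del_{t_m}\ol{U}^t_n=[\ol{U}^t_n,U_m]_{\oplus}$, each obtained from Eq.\eqref{eq:C} by a $\lambda$-degree count that kills the cross term. The only (immaterial) difference is that the paper gets the second identity by conjugate-transposing the first and swapping $m,n$, whereas you re-derive it directly from the Killing field equation for $-\ol{\bY}^t$.
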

\begin{proof}
By Eq.\eqref{eq:C}, we have
$$\del_{\tba_{n}}(U_m+U_{(m+1)})=[\ol{U}^t_{n}, U_m+U_{(m+1)}].
$$
Since the  terms in $U_{(m+1)}, \ol{U}^t_{n}$ are of $\lambda$-degree $\geq 0$,
take the $\ominus$-terms (of $\lambda$-degree $\leq -1$) only and one gets
\be\label{eq:tbnUm}
\del_{\tba_{n}} U_m=[\ol{U}^t_{n}, U_m]_{\ominus}.
\ee
Take the conjugate transpose of this equation and interchange $m, n$, and one gets
$$\del_{t_{m}} \ol{U}^t_{n}=[\ol{U}^t_{n}, U_m]_{\oplus}.$$
\end{proof}

\sub{Eq.\eqref{eq:B0}}\label{sec:B0}
This gives the formula for $\ed\rho$ in Eq.\eqref{eq:A}.

In order to show the compatibility,
one needs to verify that $\ed^2\rho=0$ is an identity.
This is equivalent to,
$$\ed\eqref{eq:B0}\equiv 0\mod \tn{ Eqs.\eqref{eq:Bp},\eqref{eq:B0},\eqref{eq:Bm}, Eq.\eqref{eq:C}}.
$$
 
\sub{{$\ed^2\rho$, or $\ed\eqref{eq:B0}$}\label{sec:ddrho}}
From Eq.\eqref{eq:B0}, we have
\be\label{eq:B0drho}
\ed\phi_0+(\bphi_+\w\bphi_-+\bphi_-\w\bphi_+)_0=0.
\ee
Differentiate this equation using the given formulas for $\ed\bphi_{\pm}$.
After collecting terms, one gets
\[
\Big([\bphi_-,\bphi_+\w\bphi_+]+[\bphi_+,\bphi_-\w\bphi_-]
+[\bphi_-,[\bphi_+,\bphi_-]_{\oplus}]+[\bphi_+,[\bphi_+,\bphi_-]_{\ominus}]\Big)_0=0.
\]
Considering the $\lambda$-degrees, this is equivalent to
\[
\Big([\bphi_-,\bphi_+\w\bphi_+]+[\bphi_+,\bphi_-\w\bphi_-]
+[\bphi_-,[\bphi_+,\bphi_-] ]+[\bphi_+,[\bphi_+,\bphi_-] ]\Big)_0=0.
\]
The expression inside the parenthesis vanishes by cancellation.

\twoline

\one
For the remainder of proof,
we first derive Eq.\eqref{eq:A} from
Eqs.\eqref{eq:Bp},\eqref{eq:B0},\eqref{eq:Bm}, Eq.\eqref{eq:C}.
Then, we show that Eq.\eqref{eq:A} is compatible.
\sub{Eq.\eqref{eq:A}}
The analysis thus far shows the compatibility of the $(-\ol{\tn{AKS}}^t,\tn{AKS})$-hierarchy
on $-\ol{\bmg_{\geq 1}}^t\times\bmg_{\geq 1}$,
under the constraints that 
$$\ed \tba_0=- \frac{1}{2}\hb_2^{\frac{1}{2}}\xib,\quad \ed t_0=-\frac{1}{2}h_2^{\frac{1}{2}}\xi.$$

From \eqref{eq:b2c2},
the formula for $\ed h_2$ is included in Eq.\eqref{eq:C}, and hence it is compatible.
Note that Eq.\eqref{eq:A} implies,
$$\ed (\hb_2^{\frac{1}{2}}\xib)=0, \quad \ed (h_2^{\frac{1}{2}}\xi)=0.$$
The formulas for $\ed\xi, \ed\xib$ will follow from these equations.
 
\subb{Formula for $\ed h_2$}\label{sec:dh2}
We first derive the formula  for $\ed h_2$.

Recall
$$U_0=\bp \cdot & h_2^{\frac{1}{2}}\\-\gamma h_2^{-\frac{1}{2}}&\cdot\ep \lambda^{-1}.
$$
Apply the formula \eqref{eq:tbnUm} for the case $m=0$,
\[
\del_{\tba_{n}} U_0= [\ol{U}^t_{n}, U_0]_{\ominus}.
\]
Since the terms in $\ol{U}^t_{n}$ have $\lambda$-degree $\geq 1$, 
whereas $U_0$ has $\lambda$-degree  $-1$, 
we have $[\ol{U}^t_{n}, U_0]_{\ominus}=0$.
Hence,
$$\del_{\tba_{n}} U_0=0, \quad\forall \,n\geq 0.$$

On the other hand, collecting the terms of $\lambda$-degree $-1$ 
from \eqref{eq:tlUm} for the case $m=0$, one gets
\[\partial_{t_{\ell}}U_0=-[U_{\ell}, U_{(1)}]_{\ominus 1}.
\]
Here the subscript ``$\ominus 1$" means the terms of $\lambda$-degree $-1$.
Consider the identity
$$[U_{\ell}+U_{(\ell+1)},U_{0}+U_{(1)}]=0,$$
(this is, up to constant scale, the trivial equation $[\bY,\bY]=0$).
Collecting the terms of $\lambda$-degree  $-1$, one gets
\begin{align*}
-[U_{\ell}, U_{(1)}]_{\ominus 1}&=[U_{(\ell+1)}, U_0]_{\ominus 1}\\
&=[(U_{(\ell+1)})_{0}, U_0].
\end{align*}
Here "$(U_{(\ell+1)})_{0}$" denotes the terms of $\lambda$-degree 0 in $U_{(\ell+1)}$.
This gives the desired formula for $\ed h_2$. 

\subb{Formula for $\ed\xi$ }\label{sec:dxi}
Given the formula for $\ed h_2$, the formula for $\ed\xi$ is determined from the equations,
\begin{align}
\xi&=-2h_2^{-\frac{1}{2}} \ed t_0, \n\\
\ed\xi&=h_2^{-\frac{1}{2}-1}\ed h_2\w\ed t_0 \n\\
&=-\frac{1}{2}h_2^{-1}\ed h_2\w\xi. \n
\end{align}
The compatibility equation $\ed^2\xi=0$   follows from this and the compatibility of $\ed h_2$.
This completes the proof for the compatibility of Eq.\eqref{eq:A}.

\section{Extension of conservation laws}\label{sec:extcvlaw}
Recall the sequence of higher-order conservation laws  $\varphi^n$,  \eqref{eq:varphin}.
We show that they admit an extension to the conservation laws of the  CMC hierarchy.

Let us  introduce a relevant notation. 
Let $$\mcd:=\mcl_{\lambda \dd{}{\lambda}}$$
be the Euler operator with respect to the spectral parameter $\lambda$.
For a scalar function, or a differential form $A$, the notation  $\bdot{A}$ (upper-dot) would mean
the application of the Euler operator,
$$\bdot{A}=\mcd (A).$$
 
\two
Set 
\be\label{eq:varphibY}
\varphi_{\bY}:=  \tr ( \bY\bdot{\bphi}).
\ee
\begin{thm}\label{thm:extcvlaw}
Consider the $\C[[\lambda^{-2},\lambda^2]]$-valued 1-form $\varphi_{\bY}$, \eqref{eq:varphibY}.
\benu[\qquad a)]
\item The 1-form $\varphi_{\bY}$ is closed,
$$\ed \varphi_{\bY}=0.$$ 
When expanded as a formal series in $\lambda^{-2},\lambda^2$,
each coefficient represents a conservation law of the CMC hierarchy.
\item $\varphi_{\bY}$ represents an extension of the sequence of conservation laws $\varphi^n$
in the following sense;
\be\label{eq:cvlawextend}
\varphi_{\bY}+\im\ed\bba\equiv-2\gamma \sum_{n=0}^{\infty}\lambda^{2n}\varphi^n
\mod \ed\ol{\tn{\bbt}}, \ed\tn{\bbt}.\ee
\enu
\end{thm}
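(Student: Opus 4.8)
The plan is to treat this as a formal consequence of the extended structure equations and the elementary properties of the Euler operator $\mcd=\mcl_{\lambda\partial_\lambda}$. Since $\mcd$ differentiates only in the spectral variable, it commutes with the exterior derivative $\ed$ on $\finfh_+$ and acts as a derivation on the wedge product and on the matrix commutator. I would write $\ed\varphi_{\bY}=\tr(\ed\bY\w\bdot{\bphi})+\tr(\bY\,\ed\bdot{\bphi})$, then substitute $\ed\bY=\bY\bphi-\bphi\bY$ (the Killing field equation in \eqref{eq:extstrt}) and $\ed\bdot{\bphi}=\mcd(\ed\bphi)=\mcd(-\bphi\w\bphi)=-\bdot{\bphi}\w\bphi-\bphi\w\bdot{\bphi}$ (the structure equation in \eqref{eq:extstrt}). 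Expanding produces four trace terms; applying cyclicity of the trace of matrix-valued forms (with the $(-1)^{pq}$ sign), two cancel identically and the remaining pair is $\tr(\bY\bdot{\bphi}\w\bphi)-\tr(\bY\bdot{\bphi}\w\bphi)=0$. Well-definedness of $\bY\bdot{\bphi}$ is as in \S\ref{sec:822}: every form-coefficient of $\bphi$, hence of $\bdot{\bphi}$, is a Laurent polynomial in $\lambda$. That only even powers of $\lambda$ occur follows from the twistedness \eqref{eq:twisted} of $\bY$ and of $\bdot{\bphi}$, so $\varphi_{\bY}$ is $\C[[\lambda^{-2},\lambda^2]]$-valued and each coefficient of its expansion is a closed $1$-form, i.e. a conservation law of the CMC hierarchy; nontriviality of infinitely many of these follows from (b) together with the nontriviality of the $\varphi^n$ recorded in \S\ref{sec:summary4}.

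\textbf{Part (b).} Working modulo $\ed\ol{\bbt},\ed\bbt$ (i.e. dropping $\ed t_m,\ed\tba_m$ for $m\ge 1$), formula \eqref{eq:bphi} collapses to $\bphi\equiv\phi_\lambda$, and since $\mcd$ does not touch the time differentials one gets $\bdot{\bphi}=\mcd\bphi\equiv\mcd\phi_\lambda=\lambda\phi_+-\lambda^{-1}\phi_-$, the $\phi_0$-term dropping out as it has $\lambda$-degree $0$. Substituting the explicit matrices \eqref{eq:phi2} and \eqref{eq:tbY} and taking the trace gives, still modulo $\ed\ol{\bbt},\ed\bbt$,
\[
\varphi_{\bY}\;\equiv\;\lambda\,(\hb_2\bbc-\gamma\bbb)\,\xib\;-\;\lambda^{-1}(\gamma\bbc-h_2\bbb)\,\xi,
\]
with $\bbb=\sum_{n\ge 0}\lambda^{2n+1}b^{2n+2}$ and $\bbc=\sum_{n\ge 0}\lambda^{2n+1}c^{2n+2}$ as in \eqref{eq:tbY}. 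On the other hand $\ed\bba=\sum_{n\ge 0}\lambda^{2n}\,\ed a^{2n+1}$, and modulo $\ed\ol{\bbt},\ed\bbt$ the extended Killing equation reduces to \eqref{eq:YKF}, so the recursion \eqref{eq:abcstrt} yields $\ed a^{2n+1}=\im(\gamma c^{2n+2}+h_2 b^{2n+2})\xi+\im(\gamma b^{2n}+\hb_2 c^{2n})\xib$ with the boundary convention $a^1=b^0=c^0=0$.

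\textbf{Comparison and main obstacle.} Adding $\im\ed\bba$ to $\varphi_{\bY}$ and re-indexing the series ($\lambda\bbc=\sum_{k\ge 1}\lambda^{2k}c^{2k}$, $\lambda^{-1}\bbc=\sum_{n\ge 0}\lambda^{2n}c^{2n+2}$, and likewise for $\bbb$), the $h_2 b^{2n+2}$- and $\hb_2 c^{2n}$-contributions cancel in each power of $\lambda$, leaving $-2\gamma\sum_{n\ge 0}\lambda^{2n}(c^{2n+2}\xi+b^{2n}\xib)=-2\gamma\sum_{n\ge 0}\lambda^{2n}\varphi^n$ by \eqref{eq:varphin}, which is \eqref{eq:cvlawextend}. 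No step carries a genuine analytic difficulty; the only places demanding care are the reductions $\bphi\equiv\phi_\lambda$ and $\bdot{\bphi}\equiv\lambda\phi_+-\lambda^{-1}\phi_-$ modulo $\ed\ol{\bbt},\ed\bbt$, the bookkeeping of $\lambda$-powers (including the boundary conventions $a^1=b^0=c^0=0$), and, in (a), the sign rules for the trace of matrix-valued forms. The conceptual content is simply that the ``non-CMC'' part of the generating $1$-form $\varphi_{\bY}$ is exactly the exact correction $-\im\,\ed\bba$ together with the classical generating series $-2\gamma\sum_{n\ge 0}\lambda^{2n}\varphi^n$.
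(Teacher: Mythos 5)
Your proposal is correct and follows essentially the same route as the paper's proof: part (a) by substituting the extended Killing field equation and $\ed\bdot{\bphi}=\mcd(-\bphi\w\bphi)$ into $\ed\,\tr(\bY\bdot{\bphi})$ and cancelling via cyclicity of the trace, and part (b) by reducing $\bphi\equiv\phi_\lambda$, $\bdot{\bphi}\equiv\lambda\phi_+-\lambda^{-1}\phi_-$ modulo $\ed\bbt,\ed\ol{\bbt}$, computing the trace explicitly, and combining with $\ed\bba$ from the recursion \eqref{eq:abcstrt} under the convention $a^1=b^0=c^0=0$. The extra remarks on well-definedness and the even-power ($\C[[\lambda^{-2},\lambda^2]]$) structure are accurate but not part of the paper's argument.
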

\begin{proof}
a)\; Differentiate $\varphi_{\bY}$, and one gets
\begin{align*}
\ed\varphi_{\bY}&=  \tr \big( \ed\bY\w\bdot{\bphi}+\bY\ed\bdot{\bphi}\big) \\
&= \tr \big( (-\bphi\bY+\bY\bphi)\w\bdot{\bphi}-\bY(\bphi\w\bdot{\bphi}+\bdot{\bphi}\w\bphi)\big)\\
&=0.
\end{align*}
b)\; Modulo $\ed\ol{\tn{\bbt}}, \ed \tn{\bbt}$,
\begin{align} 
\varphi_{\bY}=\tr ( \bY\bdot{\bphi})&\equiv\tr \left[\bp -\im\bba & 2\bbc \\ 2\bbb &  \im\bba\ep 
\bp \cdot &-\lambda\frac{1}{2}\gamma \xib+\lambda^{-1}\frac{1}{2}  h_2 \xi \\
\lambda \frac{1}{2} \hb_2 \xib- \lambda^{-1}\frac{1}{2}\gamma \xi  & \cdot\ep\right]
\mod \ed\ol{\tn{\bbt}}, \ed \tn{\bbt},  \label{eq:varphiYform} \\
&=\bbc(- \lambda^{-1} \gamma \xi+\lambda \hb_2 \xib)
+\bbb(-\lambda \gamma \xib+\lambda^{-1} h_2 \xi) \n\\
&= - \gamma(\lambda^{-1}  \bbc \xi+\lambda\bbb \xib)
+(\lambda^{-1} h_2\bbb\xi+\lambda \hb_2\bbc \xib). \n
\end{align}
On the other hand, we have
\begin{align}
\ed\bba&\equiv\lambda^{-1}(\im \gamma \bbc+\im h_2 \bbb)\xi 
+ \lambda(\im \gamma  \bbb+\im \hb_{2} \bbc)\xib  \mod \ed\ol{\tn{\bbt}}, \ed \tn{\bbt}, \label{eq:dbba}\\
\sum_{n=0}^{\infty}\lambda^{2n}\varphi^n&=\sum_{n=0}^{\infty}
\lambda^{2n}(c^{2n+2}\xi+b^{2n}\xib), \qquad \tn{(here we set $b^0=c^0=0$)} \label{eq:varphiseries}\\
&=\lambda^{-1}\bbc \xi+\lambda \bbb\xib.\n
\end{align}
Eq.\eqref{eq:cvlawextend} follows from \eqref{eq:varphiYform}, \eqref{eq:dbba}, \eqref{eq:varphiseries}.
\end{proof}

\section{Linear finite type surfaces}\label{sec:LFT}
The class of \emph{linear finite type} (ordinary) CMC surfaces are characterized by the property that
a higher-order Jacobi field vanishes, \cite{Pinkall1989},
\be\label{eq:a2N+3}
a^{2N_0+3}=0,\quad N_0\geq 0.
\ee
This implies that, up to scaling by an element in $\C[[\lambda^2]]$,
the formal Killing field $\bY$ factors into a  polynomial Killing field.

In this section, we give a geometric interpretation of this characterization
 in terms of the invariance property of $\bY$  under the higher-order symmetry.

\two
Let $\{ \del_{t_m}\}_{m=0}^{\infty}$ be the frame formally dual to $\{ \ed t_m \}_{m=0}^{\infty}$.
The CMC hierarchy defines a representation of $\{ \del_{t_m}\}_{m=0}^{\infty}$
as a sequence of commuting symmetry vector fields
on $\widehat{\mcm}_{\mcf}$, \S\ref{sec:commusymm}.

For a finite set of constants $\tn{c}_i,   0 \leq i \leq N,$  let
$$\mcv=\sum_{i=0}^N \tn{c}_i\del_{t_i}.$$
The canonical formal Killing field $\bY$  of a    CMC surface 
is stationary with respect to $\mcv$ whenever
\be\label{eq:VY}
\mcv(\bY)=0.
\ee
 
From the initial data \eqref{eq:b2c2} for the coefficients $b^2, c^2$ of $\bY$, 
and the structure equation for $h_2$ in Eq.\eqref{eq:A},
Eq.\eqref{eq:VY}  implies $\mcv(h_2)=0$ and hence
$$\sum_{i=0}^N \tn{c}_i a^{2i+3}=0.$$
It is  known that 
this is equivalent to the linear finite type condition, \cite{Pinkall1989}.

Conversely, since the deformations induced by the CMC hierarchy 
are conformal and preserve Hopf differential,
it is easily checked that,  for a vector field $\mcv$ as above,
$$\mcv (h_2)=0 \;\lra \; \mcv(\bY)=0.$$
It follows that the formal Killing field $\bY$ of a linear finite type CMC surface 
defined by the equation\eqref{eq:a2N+3}
is invariant under the higher-order symmetry  $\mcv=\del_{t_{N_0}}.$

\begin{cor}\label{cor:LFT}
The linear finite type (ordinary) CMC surfaces are characterized by the property
that the canonical formal Killing field $\bY$
is  stationary with respect to a higher-order symmetry.
\end{cor}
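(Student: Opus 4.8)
The statement is an equivalence, so the plan is to prove the two implications separately, using only the structure equations Eq.\eqref{eq:A}, the extended Killing field equation Eq.\eqref{eq:C}, and the cited characterization of linear finite type surfaces \cite{Pinkall1989}.

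For the direction \emph{stationary $\Rightarrow$ linear finite type}, I would start from a vector field $\mcv=\sum_{i=0}^N\tn{c}_i\del_{t_i}$ with $\mcv(\bY)=0$ and contract the $h_2$-equation of Eq.\eqref{eq:A},
$$\ed h_2+2\im h_2\rho=h_3\xi-2\sum_{m\geq 1}h_2\,a^{2m+3}\,\ed t_m,$$
with $\mcv$. Since $\mcv$ annihilates $\rho,\xi,\xib$, this gives $\mcv(h_2)=-2h_2\sum_{i=0}^N\tn{c}_i\,a^{2i+3}$. Next, because the lowest-order coefficients $b^2,c^2$ of $\bY$ are $-\im\gamma h_2^{-1/2}$ and $\im h_2^{1/2}$ by \eqref{eq:b2c2}, the hypothesis $\mcv(\bY)=0$ forces $\mcv(h_2)=0$; as $h_2$ is invertible on $\finfh_{**}$ this yields the relation $\sum_{i=0}^N\tn{c}_i\,a^{2i+3}=0$, which is the linear finite type condition by \cite{Pinkall1989}.

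For the converse, \emph{linear finite type $\Rightarrow$ stationary}, I would take the condition in the form $a^{2N_0+3}=0$ and set $\mcv=\del_{t_{N_0}}$. Contracting the same $h_2$-equation gives $\mcv(h_2)=0$ at once, and contracting the $\ed\xi$-, $\ed\hb_2$-, $\ed\xib$-equations of Eq.\eqref{eq:A} gives $\mcv(\xi)=a^{2N_0+3}\xi=0$ and $\mcv(\hb_2)=\mcv(\xib)=0$. It then remains to propagate these vanishings to all of $\bY$, and I see two routes. Route (i): contract Eq.\eqref{eq:C} to get $\mcv(\bY)=-[U_{N_0},\bY]$, rewrite this as $[U_{(N_0+1)},\bY]$ using $[\bY,\bY]=0$, and use the factorization of $\bY$ into a polynomial Killing field (times an element of $\C[[\lambda^2]]$) that $a^{2N_0+3}=0$ produces to see that the bracket vanishes. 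Route (ii): induct on $n$ with the recursive structure equations \eqref{eq:abcstrt} for $\{a^{2n+1},b^{2n+2},c^{2n+2}\}$, whose right-hand sides involve only $\gamma,h_2,\hb_2,\rho$ and lower coefficients, to conclude $\mcv$ kills every coefficient of $\bY$. Conceptually this is just the assertion that, because the CMC hierarchy deformations are conformal and preserve the Hopf differential (\S\ref{sec:remarkHopf}), the single vanishing $\mcv(h_2)=0$ already forces $\mcv(\bY)=0$.

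The main obstacle is this propagation step in the converse. For route (ii) one must control $\mcv(\rho)$ — the $\ed\rho$-equation of Eq.\eqref{eq:A} only pins down $\ed\rho$ modulo $\ed\bbt,\ed\ol{\bbt}$, so $\mcv(\rho)$ has to be extracted from Eq.\eqref{eq:B0} and shown to vanish before the induction closes — and one must also check that the hierarchy-time ($\ed t_m,\ed\tba_m$) terms in the \emph{extended} recursion contribute nothing under $\mcv$. Route (i) is cleaner but requires making the scaling-by-$\C[[\lambda^2]]$ statement quantitative: one needs the degree of the polynomial Killing field and which terms of $U_{(N_0+1)}$ actually survive in order to conclude $[U_{(N_0+1)},\bY]=0$. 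Either way, the bookkeeping showing that $\mcv(h_2)=0$ forces the full vanishing $\mcv(\bY)=0$ is where the argument must be made precise.
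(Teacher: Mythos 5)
Your proposal follows the paper's own argument in both directions: the forward implication is verbatim the paper's (contract the $h_2$-equation of Eq.\eqref{eq:A} with $\mcv$, use the initial data \eqref{eq:b2c2} for $b^2,c^2$ to get $\mcv(h_2)=0$ from $\mcv(\bY)=0$, and conclude $\sum_i \tn{c}_i a^{2i+3}=0$), while the converse is exactly the paper's appeal to conformality and Hopf-preservation, which the paper itself leaves as ``easily checked.'' The propagation step you flag as the main obstacle is closed most cleanly not by your routes (i) or (ii) but by recalling that $\{a^{2n+1},\,h_2^{1/2}b^{2n+2},\,h_2^{-1/2}c^{2n+2}\}_{n\ge0}\subset\mcr=\C[z_3,z_4,\dots]$ with $z_j=h_2^{-j/2}h_j$ of $\rho$-weight zero, so $\mcv(h_2)=0$ together with $[\mcv,\delx]=0$ (commutativity of the flows) annihilates every $z_j$ and hence every coefficient of $\bY$, with no need to control $\mcv(\rho)$.
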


This shows that, in a sense, 
the linear finite type CMC surfaces 
generalize such surfaces as Delaunay surfaces and  the twizzlers, 
which are invariant under  a one parameter group of motions 
  of the ambient space form, \cite{Perdomo2012}.




\bibliographystyle{amsplain}
\providecommand{\MR}[1]{}
\providecommand{\bysame}{\leavevmode\hbox to3em{\hrulefill}\thinspace}
\providecommand{\MR}{\relax\ifhmode\unskip\space\fi MR }
\providecommand{\MRhref}[2]{%
  \href{http://www.ams.org/mathscinet-getitem?mr=#1}{#2}
}
\providecommand{\href}[2]{#2}

\end{document}